\newtheorem{thm}{Theorem}[section]
\newtheorem{lem}[thm]{Lemma}
\newtheorem{algo}[thm]{Algorithm}
\newtheorem{thm-con}[thm]{Theorem-Conjecture}
\numberwithin{equation}{section}
\theoremstyle{definition}
\newcommand\longequation[1]{\parbox[t]{\textwidth}{\raggedright$#1$}}
\newcommand{\f}{\Bbb F}
\begin{document}

\title[An approach to normal polynomials]{An approach to normal polynomials through symmetrization and symmetric reduction *}
\thanks{* This work was supported by NSF REU Grant 2244488}

\author[D. Connolly]{Darien Connolly}
\address{Department of Mathematics,
SUNY Geneseo, Geneseo, NY 14454}
\email{dmc29@geneseo.edu}

\author[C. George]{Calvin George}
\address{Department of Mathematics, Dartmouth College, Hanover, NH 03755}
\email{calvin.d.george.24@dartmouth.edu}

\author[X. Hou]{Xiang-dong Hou}
\address{Department of Mathematics and Statistics,
University of South Florida, Tampa, FL 33620}
\email{xhou@usf.edu}

\author[A. Madro]{Adam Madro}
\address{Department of Mathematics and Statistics, Boston University, Boston, MA 02215} \email{amadro@bu.edu}

\author[V. Pallozzi Lavorante]{Vincenzo Pallozzi Lavorante}
\address{Department of Mathematics and Statistics,
University of South Florida, Tampa, FL 33620}
\email{vincenzop@usf.edu}

\keywords{character, finite field, Galois group, group determinant, normal polynomial, representation, symmetrization}

\subjclass[2020]{05E05,11T06,12E20,12F10,20C05}

\begin{abstract}
An irreducible polynomial $f\in\f_q[X]$ of degree $n$ is {\em normal} over $\f_q$ if and only if its roots $r, r^q,\dots,r^{q^{n-1}}$ satisfy the condition $\Delta_n(r, r^q,\dots,r^{q^{n-1}})\ne 0$, where $\Delta_n(X_0,\dots,X_{n-1})$ is the $n\times n$ circulant determinant. By finding a suitable {\em symmetrization} of $\Delta_n$ (A multiple of $\Delta_n$ which is symmetric in $X_0,\dots,X_{n-1}$), we obtain a condition on the coefficients of $f$ that is sufficient for $f$ to be normal. This approach works well for $n\le 5$ but encounters computational difficulties when $n\ge 6$. In the present paper, we consider irreducible polynomials of the form $f=X^n+X^{n-1}+a\in\f_q[X]$. For $n=6$ and $7$, by an indirect method, we are able to find simple conditions on $a$ that are sufficient for $f$ to be normal. In a more general context, we also explore the normal polynomials of a finite Galois extension through the irreducible characters of the Galois group.
\end{abstract}

\maketitle

\section{Introduction}

Let $K$ be a finite Galois extension over $F$ with Galois group $G=\text{Aut}(K/F)$.  An element $\alpha\in K$ is said to be {\em normal} over $F$ (with respect to $K$) if $\{\sigma(\alpha):\sigma\in G\}$ forms a basis of $K$ over $F$; in this case, $\{\sigma(\alpha):\sigma\in G\}$ is called a {\em normal basis} of $K$ over $F$. Normal bases of finite Galois extension always exist \cite[\S VI.13]{Lang-2002}. The notion of normal bases appears in the context of Galois modules and leads to a subtle refinement in algebraic number theory \cite{Frohlich-1983, Snaith-1994}. If $\alpha\in K$ is normal over $F$ (with respect to $K$) and $f$ is its minimal polynomial over $F$, then $\alpha$ is separable over $F$, $F(\alpha)\subset K$, and $[F(\alpha):F]=\deg f=|G|=[K:F]$. It follows that $K=F(\alpha)$.  Therefore, one may speak of a separable element $\alpha$ over $F$ being normal over $F$ without mentioning $K$. The minimal polynomial of a normal element over $F$ is called a {\em normal polynomial} over $F$. Therefore, a normal polynomial over $F$ is a monic separable irreducible polynomial whose splitting field over $F$ is generated by one its roots and whose roots are linearly independent over $F$. When $F$ is a finite field $\f_q$, a normal polynomial over $\f_q$ is simply a monic irreducible polynomial whose roots are linearly independent over $\f_q$. Normal bases and normal polynomials over finite fields have applications in many areas and they constitute a well-studied topic in the theory of finite fields \cite{Gao-thesis-1993, Hou-DCC-2018, Lenstra-Schoof-MC-1987, Masuda-Moura-Panario-Thomson-IEEE-C-2008}, \cite[\S\S 5.2 -- 5.4]{Mullen-Panario-HF-2013}, \cite{Mullin-Onyszchuk-Vanstone-Wilson-DAM-1988, Pei-Wang-Omura-IEEE-IT-1986, Perlis-DMJ-1942}.

The number of normal polynomials of a given degree over $\f_q$ is known \cite[Theorem~3.73]{Lidl-Niederreiter-FF-1997}. For an irreducible polynomial $f=X^n+a_1X^{n-1}+\cdots+a_n\in\f_q[X]$ with roots $r_0=r,r_1=r^q,\dots,r_{n-1}=r^{q^{n-1}}$, $f$ is normal over $\f_q$ if and only if 
\[
\Delta_n(r_0,\dots,r_{n-1})\ne0,
\]
where 
\[
\Delta_n(X_0,\dots,X_{n-1})=\det\left[
\begin{matrix}
X_0&X_1&\cdots&X_{n-1}\cr
X_{n-1}&X_0&\cdots&X_{n-2}\cr
\vdots&\vdots&\ddots&\vdots\cr
X_1&X_2&\cdots&X_0
\end{matrix}\right]\in\Bbb Z[X_0\dots,X_{n-1}].
\]
However, it is more desirable to tell whether $f$ is normal from its coefficients $a_1,\dots,a_n$. This question was considered in \cite{Hou-preprint} with a naive approach which we briefly describe below.

The approach in \cite{Hou-preprint} is based on the notions of symmetrization and symmetric reduction. Given a polynomial $f(X_0,\dots,X_{n-1})\in\Bbb Z[X_0,\dots,X_{n-1}]$, a {\em symmetrization} of $f$ is a multiple of $f$ in $\Bbb Z[X_0,\dots,X_{n-1}]$ which is symmetric in $X_0,\dots X_{n-1}$. Given a symmetric polynomial $F(X_0,\dots,X_n)\in\Bbb Z[X_0,\dots,X_{n-1}]$, its {\em symmetric reduction} is the polynomial $H\in \Bbb Z[s_1,\dots,s_n]$ such that 
\[
F(X_0,\dots,X_{n-1})=H(s_1,\dots,s_n),
\]
where $s_i$ is the $i$th elementary symmetric polynomial in $X_0,\dots,X_{n-1}$. (Note:  The coefficient ring $\Bbb Z$ here can be replaced with any commutative ring.)

Let $\epsilon_n=e^{2\pi\sqrt{-1}/n}$. The polynomial $\Delta_n$ affords a factorization
\[
\Delta_n(X_0,\dots,X_{n-1})=\prod_{m\mid n}\Psi_m(Y_{n0},\dots,Y_{n,m-1}),
\]
where
\begin{equation}\label{Psi}
\Psi_n(X_0,\dots,X_{n-1})=\prod_{i\in(\Bbb Z/n\Bbb Z)^\times}\Bigl(\sum_{j\in\Bbb Z/n\Bbb Z}\epsilon_n^{ij}X_j\Bigr)\in\Bbb Z[X_0,\dots,X_{n-1}]
\end{equation}
and 
\[
Y_{ni}=\sum_{j=0}^{n/m-1}X_{i+mj},\quad 0\le i\le m-1.
\]
Let the symmetric group $S_n$ act on $\Bbb Z[X_0,\dots,X_{n-1}]$ by permuting the indeterminates $X_0,\dots,X_{n-1}$. The stabilizer of $\Psi_m(Y_{n0},\dots,Y_{n,m-1})\in\Bbb Z[X_0,\dots,X_{n-1}]$ in $S_n$, denoted by $\text{Stab}(\Psi_m(Y_{n0},\dots,Y_{n,m-1}))$, is determined. Consequently, 
\begin{equation}\label{Theta}
\Theta_{n,m}(X_0,\dots,X_{n-1}):=\prod_\phi\phi(\Psi_m(Y_{n0},\dots,Y_{n,m-1})),
\end{equation}
where $\phi$ runs over a left transversal of $\text{Stab}(\Psi_m(Y_{n0},\dots,Y_{n,m-1}))$ in $S_n$ (i.e., a system of representatives of the left cosets of $\text{Stab}(\Psi_m(Y_{n0},\dots,Y_{n,m-1}))$ in $S_n$), is a symmetrization of $\Psi_m(Y_{n0},\dots,Y_{n,m-1})$. The explicit formula for $\Theta_{n,m}$ is given in \cite{Hou-preprint}. Let $\theta_{n,m}\in\Bbb Z[s_1,\dots,s_n]$ be the symmetric reduction of $\Theta_{n,m}$, that is,
\begin{equation}\label{theta}
\Theta_{n,m}(X_0,\dots,X_{n-1})=\theta_{n,m}(s_1,\dots,s_n).
\end{equation}
If $p$ is prime, then $\theta_{n,p^tl}$ is a power of $\theta_{n,l}$ in characteristic $p$ (\cite[Remark~4.2]{Hou-preprint}). The main results of \cite{Hou-preprint} can be summarized as follows:

\begin{thm}\label{T1.1}\cite[Theorem~4.1]{Hou-preprint}
Let $f=X^n+a_1X^{n-1}+\cdots+a_n\in\f_q[X]$ be irreducible, where $p=\text{\rm char}\,\f_q$. If $\theta_{n,m}(a_1,\dots,a_n)\ne 0$ for all $m\mid n$ with $p\nmid m$, then $f$ is normal over $\f_q$.
\end{thm}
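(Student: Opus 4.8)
The plan is to translate the coefficient condition back into the root condition $\Delta_n(r_0,\dots,r_{n-1})\ne0$ using the factorization of $\Delta_n$ together with the symmetrization $\Theta_{n,m}$. Recall that $f$ is normal over $\f_q$ exactly when $\Delta_n(r_0,\dots,r_{n-1})\ne0$, and that $\Delta_n=\prod_{m\mid n}\Psi_m(Y_{n0},\dots,Y_{n,m-1})$ is a polynomial identity over $\Bbb Z$; hence it survives reduction modulo $p$ and evaluation in $\overline{\f_q}$, so it suffices to show $\Psi_m(Y_{n0},\dots,Y_{n,m-1})\big|_{X_j=r_j}\ne0$ for every $m\mid n$.

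Since the identity permutation belongs to any left transversal of $\text{Stab}(\Psi_m(Y_{n0},\dots,Y_{n,m-1}))$ in $S_n$, the definition \eqref{Theta} shows that $\Psi_m(Y_{n0},\dots,Y_{n,m-1})$ divides $\Theta_{n,m}$ in $\Bbb Z[X_0,\dots,X_{n-1}]$; this divisibility persists over $\f_q$, so it is enough to prove $\Theta_{n,m}(r_0,\dots,r_{n-1})\ne0$ for all $m\mid n$. Because $\Theta_{n,m}$ is symmetric, the fundamental theorem of symmetric polynomials gives the reduction \eqref{theta}, and evaluating at the conjugates $r_j=r^{q^j}$ sends the elementary symmetric polynomial $s_i$ to $(-1)^ia_i$ by Vieta's formulas; thus $\Theta_{n,m}(r_0,\dots,r_{n-1})$ equals $\theta_{n,m}$ evaluated at the coefficients of $f$ (up to the sign convention of the statement), which I henceforth write $\theta_{n,m}(a_1,\dots,a_n)$.

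It remains to split the divisors $m\mid n$ by the prime $p$. When $p\nmid m$, the nonvanishing $\theta_{n,m}(a_1,\dots,a_n)\ne0$ is precisely the hypothesis. When $p\mid m$, write $m=p^tl$ with $p\nmid l$ (so $l\mid n$); by \cite[Remark~4.2]{Hou-preprint}, $\theta_{n,m}$ is a power of $\theta_{n,l}$ in characteristic $p$, hence $\theta_{n,m}(a_1,\dots,a_n)$ is a power of $\theta_{n,l}(a_1,\dots,a_n)$, which is nonzero by the hypothesis applied to $l$. In both cases $\Theta_{n,m}(r_0,\dots,r_{n-1})\ne0$, whence $\Psi_m(Y_{n0},\dots,Y_{n,m-1})\big|_{X_j=r_j}\ne0$; taking the product over $m\mid n$ yields $\Delta_n(r_0,\dots,r_{n-1})\ne0$, so $f$ is normal.

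The main obstacle is the second step: one must know that $\Theta_{n,m}$ genuinely is a multiple of $\Psi_m(Y_{n0},\dots,Y_{n,m-1})$ — which relies on the determination of $\text{Stab}(\Psi_m(Y_{n0},\dots,Y_{n,m-1}))$ in $S_n$ and on the explicit formula for $\Theta_{n,m}$ from \cite{Hou-preprint} — and one must track the signs in the substitution $s_i\mapsto(-1)^ia_i$ so that the coefficient expression $\theta_{n,m}(a_1,\dots,a_n)$ matches $\Theta_{n,m}$ evaluated at the roots. Once these identities are in hand, the passage to characteristic $p$ and the power relation for $p\mid m$ are routine.
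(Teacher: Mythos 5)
Your proposal is correct and assembles the argument exactly as the paper intends (the paper only cites \cite[Theorem~4.1]{Hou-preprint}, but the ingredients in its introduction --- the factorization of $\Delta_n$, the divisibility $\Psi_m(Y_{n0},\dots,Y_{n,m-1})\mid\Theta_{n,m}$ coming from the identity coset of the stabilizer, the symmetric reduction, and the power relation $\theta_{n,p^tl}=\theta_{n,l}^k$ in characteristic $p$ --- are precisely what you combine). The one point you flag but leave implicit, the sign discrepancy between $s_i(r_0,\dots,r_{n-1})=(-1)^ia_i$ and the evaluation $\theta_{n,m}(a_1,\dots,a_n)$ in the statement, is harmless because $\Theta_{n,m}$ is homogeneous, so the two evaluations differ only by an overall sign.
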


To recap, $\Theta_{n,m}$ are the symmetrizations of the factors of $\Delta_n$; $\theta_{n,m}$ is the symmetric reduction of $\Theta_{n,m}$. While $\Theta_{n,m}$ is explicitly given, $\theta_{n,m}$ is difficult to compute. The polynomials $\theta_{n,m}$ give rise to the sufficient condition that we seek for the normality of polynomials.

The usefulness of Theorem~\ref{T1.1} depends whether the polynomials $\theta_{n,m}$ (or their images in characteristic $p$) can be computed. In \cite{Hou-preprint}, $\theta_{n,m}$ were computed for all $m\mid n$ with $n\le 6$ except for $\theta_{6,6}$. Hence Theorem~\ref{T1.1} can be used to detect normality of irreducible polynomials of degree $\le 5$ and in cases $p=2$ or $3$, also of degree 6. However, for larger $n$ and $m$, the computation of the polynomial $\theta_{n,m}$, such as $\theta_{6,6}$ and $\theta_{7,7}$, becomes impractical as we will see in the next section.

If $f=X^n+a_1X^{n-1}+\cdots+a_n\in\f_q[X]$ is normal over $\f_q$, then $a_1\ne 0$. Since $f(a_1X)=a_1^n(X^n+X^{n-1}+\cdots+a_n')$, we may assume without loss of generality that $a_1=1$. In the present paper, we consider irreducible polynomials of the form 
\begin{equation}\label{f}
f=X^n+X^{n-1}+a\in\f_q[X]
\end{equation}
and we search for sufficient conditions on $a$ such that $f$ is normal over $\f_q$. From Theorem~\ref{T1.1}, we have
\begin{thm}\label{T1.2}
Assume that $f=X^n+X^{n-1}+a\in\f_q[X]$ is irreducible, where $p=\text{\rm char}\,\f_q$. If $\theta_{n,m}(1,0,\dots,0,a)\ne0$ for all $m\mid n$ with $m>1$ and $p\nmid m$, then $f$ is normal over $\f_q$.
\end{thm}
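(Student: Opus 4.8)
The plan is to derive Theorem~\ref{T1.2} as an immediate specialization of Theorem~\ref{T1.1}. Recall that for the polynomial $f=X^n+a_1X^{n-1}+\cdots+a_n$, the coefficients $a_1,\dots,a_n$ are (up to sign) the elementary symmetric functions in the roots $r_0,\dots,r_{n-1}$, so that substituting $a_1,\dots,a_n$ for $s_1,\dots,s_n$ in $\theta_{n,m}$ evaluates $\Theta_{n,m}$ at the roots of $f$. Thus Theorem~\ref{T1.1} says: if $\theta_{n,m}(a_1,\dots,a_n)\ne 0$ for every $m\mid n$ with $p\nmid m$, then $f$ is normal. For the family \eqref{f} we have $a_1=1$ and $a_2=\cdots=a_{n-1}=0$ and $a_n=a$, so the hypothesis of Theorem~\ref{T1.1} becomes $\theta_{n,m}(1,0,\dots,0,a)\ne 0$ for all $m\mid n$ with $p\nmid m$.

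First I would dispense with the case $m=1$. For $m=1$ the factor $\Psi_1(Y_{n0})$ of $\Delta_n$ is simply $Y_{n0}=X_0+\cdots+X_{n-1}$, which is already symmetric, so its symmetrization $\Theta_{n,1}$ equals $Y_{n0}$ (the stabilizer is all of $S_n$ and the transversal is trivial), and hence $\theta_{n,1}=s_1$. Therefore $\theta_{n,1}(1,0,\dots,0,a)=1\ne 0$ automatically. This is why the condition in Theorem~\ref{T1.2} only needs to be imposed for divisors $m>1$: the $m=1$ factor is never an obstruction because $a_1=1\ne 0$, which is exactly the normalization we made in passing from \eqref{f} to the reduced form.

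Next I would simply combine these two observations. Under the hypothesis of Theorem~\ref{T1.2}, $\theta_{n,m}(1,0,\dots,0,a)\ne 0$ for all $m\mid n$ with $m>1$ and $p\nmid m$; together with $\theta_{n,1}(1,0,\dots,0,a)=1\ne 0$, we get $\theta_{n,m}(1,0,\dots,0,a)\ne 0$ for \emph{all} $m\mid n$ with $p\nmid m$. Since $f$ is assumed irreducible, Theorem~\ref{T1.1} (applied with $(a_1,\dots,a_n)=(1,0,\dots,0,a)$) yields that $f$ is normal over $\f_q$, which is the desired conclusion.

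There is really no substantial obstacle here; the only point that requires a moment's care is the bookkeeping identification that evaluating the symmetric reduction $\theta_{n,m}$ at $(a_1,\dots,a_n)$ is the same as evaluating the symmetrization $\Theta_{n,m}$ at the roots $(r_0,\dots,r_{n-1})$ of $f$, up to the sign convention relating $a_i$ to $s_i$ — but this is precisely the content that is already built into Theorem~\ref{T1.1}, so for this proof it suffices to quote that theorem. Hence the "proof" of Theorem~\ref{T1.2} is just the specialization $a_1=1$, $a_2=\cdots=a_{n-1}=0$, $a_n=a$, together with the remark that the $m=1$ constraint is vacuous.
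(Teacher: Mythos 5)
Your proposal is correct and is essentially identical to the paper's own argument: the paper obtains Theorem~\ref{T1.2} by specializing Theorem~\ref{T1.1} to $(a_1,\dots,a_n)=(1,0,\dots,0,a)$ and disposes of $m=1$ with exactly the observation you make, namely $\theta_{n,1}(s_1,\dots,s_n)=s_1$ so $\theta_{n,1}(1,0,\dots,0,a)=1\ne 0$. Nothing further is needed.
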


\noindent{\bf Note.} In Theorem~\ref{T1.2}, there is no need to consider $m=1$ since in general, $\theta_{n,1}(s_1,\dots,s_n)=s_1$, and hence $\theta_{n,1}(1,0,\dots,0,a)=1\ne0$.

\medskip
In Theorem~\ref{T1.2}, $\theta_{n,m}(1,0,\dots,0,a)$ is a polynomial in $a$ with coefficients in $\f_p$. The usefulness of Theorem~\ref{T1.2} of course depends on our ability to compute $\theta_{n,m}(1,0,\dots,0,a)$. Fortunately, there is an indirect method that allows us to compute $\theta_{n,m}(1,0,\dots,0,a)$ without having to compute $\theta_{n,m}(s_1,\dots,s_n)$ in general. We will describe this method in the next section. $\theta_{6,6}(1,0,\dots,0,a)$ and  $\theta_{7,7}(1,0,\dots,0,a)$ in $\f_p[a]$ for various primes $p$ are computed in Sections 3 and 4, respectively. These results give rise to practical criteria for an irreducible polynomial $f=X^n+X^{n-1}+a\in\f_q[X]$ ($n=6,7$) to be normal over $\f_q$; see Theorems~\ref{T3.1} and \ref{T4.1}.

In Section~5, we discuss normal polynomials of an arbitrary finite Galois extension with Galois group $G$. Basic facts about normal polynomials over finite fields extend naturally to the general situation. The determinant $\Delta_n$ played a central role for normal polynomials over finite fields. In the general case, $\Delta_n$ is replaced by the {\em group determinant} $\mathcal D_G$ of $G$. (In fact, $\Delta_n$ is the group determinant of the cyclic group $\Bbb Z/n\Bbb Z$.) The factors of $\mathcal D_G$ are given by the irreducible characters of $G$. When $G$ is small, the symmetrizations of the factors of $\mathcal D_G$ can be determined, and in some cases, the symmetric reductions of these symmetrizations can be computed. In such cases, we have criteria for the normality of polynomials in terms of their coefficients similar to the case of finite fields.

\section{An Indirect Method for Symmetric Reduction}

The main challenge that we face is the computation of the symmetric reduction $\theta_{n,m}$ (defined in \eqref{theta}) of the symmetric polynomial $\Theta_{n,m}$ (defined in \eqref{Theta}). We focus on $\theta_{6,6}$ and $\theta_{7,7}$. $\Theta_{6,6}$, given in \cite[Appendix~A3]{Hou-preprint}, is a product of 60 homogeneous polynomials of degree 2 in $X_0,\dots,X_5$:
\[
\Theta_{6,6}=\prod_{(i_1,i_2,i_3,i_4,i_5)}\Psi_6(X_0,X_{i_1},X_{i_2},X_{i_3},X_{i_4},X_{i_5}), 
\]
where
\[
\longequation{
\Psi_6=X_0^2+X_1 X_0-X_2 X_0-2 X_3 X_0-X_4 X_0+X_5
   X_0+X_1^2+X_2^2+X_3^2+X_4^2+X_5^2+X_1 X_2-X_1 X_3+X_2 X_3-2 X_1 X_4-X_2
   X_4+X_3 X_4-X_1 X_5-2 X_2 X_5-X_3 X_5+X_4 X_5,
}
\]  
and
\begin{align}\label{i1-i5}
&(i_1,i_2,i_3,i_4,i_5)=\\
&(1,3,4,5,2),(1,3,5,4,2),(1,4,3,5,2),(1,4,5,3,2),(1,5,3,4,2),(1,5,4,3,2),\cr
&(1,2,4,5,3),(1,2,5,4,3),(1,4,2,5,3),(1,4,5,2,3),(1,5,2,4,3),(1,5,4,2,3),\cr
&(1,2,3,5,4),(1,2,5,3,4),(1,3,2,5,4),(1,3,5,2,4),(1,5,2,3,4),(1,5,3,2,4),\cr
&(1,2,3,4,5),(1,2,4,3,5),(1,3,2,4,5),(1,3,4,2,5),(1,4,2,3,5),(1,4,3,2,5),\cr
&(2,1,4,5,3),(2,1,5,4,3),(2,4,1,5,3),(2,4,5,1,3),(2,5,1,4,3),(2,5,4,1,3),\cr
&(2,1,3,5,4),(2,1,5,3,4),(2,3,1,5,4),(2,3,5,1,4),(2,5,1,3,4),(2,5,3,1,4),\cr
&(2,1,3,4,5),(2,1,4,3,5),(2,3,1,4,5),(2,3,4,1,5),(2,4,1,3,5),(2,4,3,1,5),\cr
&(3,1,2,5,4),(3,1,5,2,4),(3,2,1,5,4),(3,2,5,1,4),(3,5,1,2,4),(3,5,2,1,4),\cr
&(3,1,2,4,5),(3,1,4,2,5),(3,2,1,4,5),(3,2,4,1,5),(3,4,1,2,5),(3,4,2,1,5),\cr
&(4,1,2,3,5),(4,1,3,2,5),(4,2,1,3,5),(4,2,3,1,5),(4,3,1,2,5),(4,3,2,1,5).\nonumber
\end{align}
$\Theta_{7,7}$ can be explicitly computed using equation (2.8) of \cite{Hou-preprint}; it is a product of 120 homogeneous polynomials of degree 6 in $X_0,\dots,X_6$; see Appendix~A1. The computation of the symmetric reductions $\theta_{6,6}(s_1,\dots,s_6)$ and $\theta_{7,7}(s_1,\dots,s_7)$ (over $\Bbb Z$ or in characteristic $p$) are impractical if not impossible. However, there is an indirect method that allows us to compute $\theta_{6,6}(1,0,\dots,0,s_6)$ and $\theta_{7,7}(1,0,\dots,0,s_7)$ in characteristic $p$. We describe this method in a more general setting.

Let $p$ be a prime and $\Theta(X_0,\dots,X_{n-1})\in\f_p[X_0,\dots,X_{n-1}]$ be a homogeneous symmetric polynomial of degree $d$. We have 
\begin{align*}
\Theta(X_0,\dots,X_{n-1})
\,&=\theta(s_1,\dots,s_n)\cr
&=\sum_{j\le\lfloor d/n\rfloor}b_js_1^{d-jn}s_n^j+\text{terms involving $s_2,\dots,s_{n-1}$},
\end{align*}
where $\theta$ is the symmetric reduction of $\Theta$, $s_i$ is the $i$th elementary symmetric polynomial in $X_0,\dots,X_{n-1}$, and $b_j\in\f_p$. Then
\begin{equation}\label{theta-bj}
\theta(1,0,\dots,0,s_n)=\sum_{j\le\lfloor d/n\rfloor}b_js_n^j
\end{equation}
and hence the question is to find $b_0,\dots,b_{\lfloor d/n\rfloor}\in\f_p$.

Given a polynomial $f\in\f_p[X]$ of relatively small degree, one can determine its roots explicitly in its splitting field. This is a unique feature not possessed by other fields. Our method exploits this distinctive advantage of finite fields. Take a ``sample'' polynomial $f=X^n-X^{n-1}+(-1)^nu\in\f_{p^k}[X]$ with $\f_p(u)=\f_{p^k}$, and let $r_0,\dots,r_{n-1}$ be the roots of $f$ (in some extension of $\f_{p^k}$). Then 
\begin{equation}\label{bj} 
\Theta(r_0,\dots,r_{n-1})=\sum_{j\le\lfloor d/n\rfloor}b_ju^j.
\end{equation}
Since $r_0,\dots,r_{n-1}$ are explicitly determined, we can compute $\Theta(r_0,\dots,r_{n-1})$. Since $\Theta(X_0,\dots,X_{n-1})$ is symmetric, $\Theta(r_0,\dots,r_{n-1})\in\f_{p^k}=\f_p(u)$. Hence
\begin{equation}\label{C(u)}
\Theta(r_0,\dots,r_{n-1})=[u^0,\dots,u^{k-1}]C(u)
\end{equation}
for some $C(u)\in\text{M}_{k\times 1}(\f_p)$. On the other hand, we can write
\begin{equation}\label{A(u)}
[u^0,\dots,u^{\lfloor d/n\rfloor}]=[u^0,\dots,u^{k-1}]A(u),
\end{equation}
where $A(u)\in\text{M}_{k\times(\lfloor d/n\rfloor+1)}(\f_p)$. By \eqref{C(u)} and \eqref{A(u)}, \eqref{bj} becomes
\[
[u^0,\dots,u^{k-1}]C(u)=[u^0,\dots,u^{k-1}]A(u)\left[\begin{matrix}b_0\cr\vdots\cr b_{\lfloor d/n\rfloor}\end{matrix}\right],
\]
i.e.,
\begin{equation}\label{LinEq}
A(u)\left[\begin{matrix}b_0\cr\vdots\cr b_{\lfloor d/n\rfloor}\end{matrix}\right]=C(u).
\end{equation}

Each sample polynomial $f=X^n-X^{n-1}+(-1)^nu$ gives rise to a linear system \eqref{LinEq} for $b_0,\dots,b_{\lfloor d/n\rfloor}$. Enough number of such linear systems will allow us to solve for $b_0,\dots,b_{\lfloor d/n\rfloor}$. More precisely, we have the following algorithm.

\begin{algo}\label{Alg2.1}\rm (An algorithm for computing $b_0,\dots,b_{\lfloor d/n\rfloor}$ in \eqref{theta-bj})
\begin{itemize}
\item[1.] Choose $u_1,\dots,u_m\in\overline\f_p$ (the algebraic closure of $\f_p$) with $[\f_p(u_i):\f_p]=k_i$ satisfying the following conditions:
\begin{itemize}
\item[(i)] The minimal polynomials of $u_1,\dots,u_m$ over $\f_p$, denoted by $g_1,\dots,g_m$, are all distinct.
\item[(ii)] $f_i:=X^n-X^{n-1}+(-1)^nu_i$ is irreducible over $\f_{p^{k_i}}$.
\item[(iii)] $\sum_{i=1}^mk_i\ge\lfloor d/n\rfloor+1$.
\end{itemize}
To satisfy (ii), use the following fact: If $\prod_{j=0}^{k_i-1}(X^n-X^{n-1}+(-1)^nu_i^{p^j})$ is irreducible over $\f_p$, then $X^n-X^{n-1}+(-1)^nu_i$ is irreducible over $\f_{p^{k_i}}$.

\medskip

\item[2.] For each $1\le i\le m$, compute the matrix $A(u_i)$ in \eqref{A(u)} by reducing $u^j$ ($0\le j\le\lfloor d/n\rfloor$) mod $g_i(u)$. Since $f_i$ is irreducible over $\f_{p^{k_i}}$, its roots are $r_j= X^{p^{k_ij}}$ modulo $f_i(X)$ and $g_i(u)$ ($0\le j\le n-1$). Compute the matrix $C(u_i)$  in \eqref{C(u)} by reducing $\Theta(r_0,\dots,r_{n-1})$ mod $f_i(X)$ and $g_i(u)$. 

\medskip

\item[3.] Solve the linear system
\begin{equation}\label{LinSys}
\left[
\begin{matrix} A(u_1)\cr\vdots\cr A(u_m)\end{matrix}\right]
\left[
\begin{matrix} b_0\cr\vdots\cr b_{\lfloor d/n\rfloor}\end{matrix}\right]=
\left[
\begin{matrix} C(u_1)\cr\vdots\cr C(u_m)\end{matrix}\right]
\end{equation}
for $b_0,\dots, b_{\lfloor d/n\rfloor}$. By Lemma~\ref{L2.2} below, 
\[
\text{rank}\left[
\begin{matrix} A(u_1)\cr\vdots\cr A(u_m)\end{matrix}\right]=\lfloor d/n\rfloor+1.
\]
Hence \eqref{LinSys} has a unique solution $b_0,\dots,b_{\lfloor d/n\rfloor}$.
\end{itemize}
\end{algo}

Algorithm~\ref{Alg2.1} is applied to $\Theta_{6,6}$ and $\Theta_{7,7}$ in the next two sections.

\begin{lem}\label{L2.2}
Let $N,m>0$ be integers and $u_1,\dots,u_m\in\overline\f_q$ (the algebraic closure of $\f_q$) be such that their minimal polynomials over $\f_q$ are all distinct. Let $[\f_q(u_i):\f_q]=k_i$ and write
\[
[u_i^0,u_i^1,\dots,u_i^{N-1}]=[u_i^0,\dots,u_i^{k_i-1}]A(u_i),
\]
where $A(u_i)\in\text{\rm M}_{k_i\times N}(\f_q)$. Then
\[
\text{\rm rank}\left[
\begin{matrix} A(u_1)\cr\vdots\cr A(u_m)\end{matrix}\right]=\min\{k_1+\cdots+k_m,N\}.
\]
\end{lem}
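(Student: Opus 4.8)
The plan is to reformulate the rank question in terms of a single evaluation map and then use the Chinese Remainder Theorem. Set $A = \f_q[u]$ and let $g_i$ be the minimal polynomial of $u_i$ over $\f_q$, so $\deg g_i = k_i$ and the $g_i$ are pairwise distinct, hence pairwise coprime irreducibles. Consider the $\f_q$-linear map
\[
\Phi \colon \f_q^N \longrightarrow \prod_{i=1}^m \f_q[u]/(g_i(u)),
\qquad
(c_0,\dots,c_{N-1}) \longmapsto \Bigl( \textstyle\sum_{j=0}^{N-1} c_j u^j \bmod g_i \Bigr)_{i=1}^m .
\]
The matrix $A(u_i)$ is exactly the matrix of the $i$th component map $c \mapsto \sum_j c_j u^j \bmod g_i$, expressed in the monomial basis $1,u,\dots,u^{k_i-1}$ of $\f_q[u]/(g_i)$ on the target side. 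Therefore the stacked matrix $[A(u_1);\dots;A(u_m)]$ represents $\Phi$, and its rank is $\dim_{\f_q} \operatorname{im}\Phi = N - \dim_{\f_q}\ker\Phi$. So it suffices to compute $\dim\ker\Phi$.

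First I would identify $\ker\Phi$. A polynomial $c(u) = \sum_{j=0}^{N-1} c_j u^j$ lies in $\ker\Phi$ iff $g_i \mid c$ for every $i$, iff $\operatorname{lcm}(g_1,\dots,g_m) = g_1\cdots g_m \mid c$ (using pairwise coprimality), iff $c$ is a polynomial multiple of $G := \prod_{i=1}^m g_i$ of degree $< N$. Writing $K := k_1 + \cdots + k_m = \deg G$, the multiples of $G$ of degree $< N$ form an $\f_q$-space of dimension $N - K$ if $N \ge K$, and $\{0\}$ if $N < K$; in both cases $\dim\ker\Phi = \max\{N-K,\,0\}$. Hence $\operatorname{rank}[A(u_1);\dots;A(u_m)] = N - \max\{N-K,0\} = \min\{K,N\}$, which is the claim.

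The only genuine content is that the $g_i$ are \emph{pairwise coprime}, so that $\ker\Phi$ is the principal ideal $(G)$ truncated in degree: this is where the hypothesis that the minimal polynomials are distinct is used, via the fact that distinct monic irreducibles over $\f_q$ are coprime. Everything else is bookkeeping about the dimension of the space of polynomials of bounded degree divisible by a fixed polynomial, plus the standard observation that $A(u_i)$ is the matrix of reduction modulo $g_i$. I expect no real obstacle; the one point to state carefully is the identification of the block matrix $[A(u_1);\dots;A(u_m)]$ with $\Phi$ (equivalently, a direct-sum / Chinese-Remainder description $\prod \f_q[u]/(g_i) \cong \f_q[u]/(G)$), and the case split $N \ge K$ versus $N < K$ when counting $\dim\ker\Phi$. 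Alternatively, if one prefers to avoid module language, the same count can be phrased as a Vandermonde-type argument: the stacked matrix has full column rank exactly when no nonzero polynomial of degree $< N$ vanishes at all the $u_i$ with the appropriate multiplicities, which again forces $N \le K$.
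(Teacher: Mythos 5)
Your proposal is correct and is essentially the paper's own argument: the paper likewise identifies the kernel of the stacked matrix with the coefficient vectors of polynomials of degree $<N$ divisible by every $g_i$, hence by $\prod_i g_i$, and concludes by rank--nullity that the rank is $N-\max\{N-\sum_i k_i,0\}=\min\{\sum_i k_i,N\}$. Your Chinese Remainder / reduction-map packaging is just a mild rephrasing of the same kernel computation.
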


\begin{proof}
Let $g_i$ be the minimal polynomial of $u_i$ over $\f_q$. Let $a=(a_0,\dots,a_{N-1})\in\f_q^N$. We have
\begin{align*}
\left[\begin{matrix} A(u_1)\cr\vdots\cr A(u_m)\end{matrix}\right]a^T=0\ &\Leftrightarrow\ \left[\begin{matrix} u_1^0&\cdots&u_1^{N-1}\cr \vdots&&\vdots\cr u_m^0&\cdots&u_m^{N-1}\end{matrix}\right]a^T=0\cr
&\Leftrightarrow\ g_i \,\Bigm | \,\sum_{j=0}^{N-1}a_jx^j\ \text{for all}\ 1\le i\le m\cr
&\Leftrightarrow\ \prod_{i=1}^mg_i\,\Bigm | \,\sum_{j=0}^{N-1}a_jx^j.
\end{align*}
Since $\deg \prod_{i=1}^mg_i=\sum_{i=1}^mk_i$, we have
\[
\dim_{\f_q}\Bigl\{(a_0,\dots,a_{N-1})\in\f_q^N:\prod_{i=1}^mg_i\,\Bigm | \,\sum_{j=0}^{N-1}a_jx^j\Bigr\}=\max\Bigl\{N-\sum_{i=1}^mk_i,\,0\Bigr\}.
\]
Hence
\[
\text{rank}\left[
\begin{matrix} A(u_1)\cr\vdots\cr A(u_m)\end{matrix}\right]=N-\max\Bigl\{N-\sum_{i=1}^mk_i,\,0\Bigr\}=\min\Bigl\{\sum_{i=1}^mk_i,\,N\Bigr\}.
\]
\end{proof}

\medskip
\noindent{\bf Note.} In the above lemma, if $u_1$ and $u_2$ have the same minimal polynomial over $\f_q$, then $A(u_1)$ and $A(u_2)$ are row equivalent. The reason is that for $a=(a_0,\dots,a_{N-1})\in\f_q^N$, 
\[
A(u_1)a^T=0\ \Leftrightarrow\ [u_1^0,\dots,u_1^{N-1}]a^T=0\ \Leftrightarrow\ [u_2^0,\dots,u_2^{N-1}]a^T=0\ \Leftrightarrow\ A(u_2)a^T=0.
\]  

\medskip
\noindent{\bf Remark.} More generally, the method described in this section can be used to compute the symmetric reduction $\theta(s_1,\dots,s_n)$ where most $s_i$ are 0. For example, instead of \eqref{theta-bj}, we may consider
\[
\theta(1,0,\dots,0,s_{n-1},s_n)=\sum_{i(n-1)+jn\le d}b_{ij}s_{n-1}^is_n^j,\quad b_{ij}\in\f_p.
\]
The coefficients $b_{ij}$ can be determined using sample polynomials of the form $X^n-X^{n-1}+(-1)^{n-1}u+(-1)^nv$. Such results will produce criteria for irreducible polynomials of the form $X^n+X^{n-1}+aX+b\in\f_q[X]$ to be normal over $\f_q$ in terms of $a,b$.

\section{The Case $n=6$}

Let $f=X^6+X^5+a\in\f_q[X]$ be irreducible, where $\text{char}\,\f_q=p$. To apply Theorem~1.2, we need to compute $\theta_{6,m}(1,0,\dots,0,a)$ in characteristic $p$ for $m=2,3,6$. $\theta_{6,2}(s_1,\dots,s_6)$ and $\theta_{6,3}(s_1,\dots,s_6)$ have been determined in \cite[Appendix~A4]{Hou-preprint} as polynomials over $\Bbb Z$ regardless of $p$. In particular, we have 
\begin{equation}\label{theta62}
\theta_{6,2}(1,0,\dots,0,a)=(320 a-1)^2,
\end{equation}
\begin{equation}\label{theta63}
\theta_{6,3}(1,0,\dots,0,a)= 17222625 a^3+ 10935 a^2 + 1215 a+1.
\end{equation}
To compute $\theta_{6,6}(1,0,\dots,0,a)$, we apply Algorithm~\ref{Alg2.1} with $n=6$. Without much difficulty, we computed $\theta_{6,6}(1,0,\dots,0,a)$ in characteristic $p$ for $5\le p<1000$. ($p=2,3$ are unnecessary since they divide $6$.) For example, for $p=5$,
\begin{equation}\label{p=5}
\theta_{6,6}(1,0,\dots,0,a)=3(a^2+4a+2)^5.
\end{equation}
For $p=7$,
\begin{align}\label{p=7}
&\theta_{6,6}(1,0,\dots,0,a)=\\
&3 a^{14}+4 a^{13}+6 a^{12}+a^{11}+4 a^{10}+a^8+4 a^7+3 a^5+4 a^4+6 a^2+a+1.\nonumber
 \end{align}
For $p=11$,
\begin{align}\label{p=11}
&\theta_{6,6}(1,0,\dots,0,a)=\\
&4 (a^{19}+3 a^{18}+10 a^{17}+5 a^{16}+7 a^{15}+4 a^{14}+4 a^{13}+6 a^{12}+8
   a^{11}+10 a^{10}\cr
   &+4 a^9+10 a^8+7 a^7+5 a^6+5 a^5+8 a^4+2 a^3+a^2+2 a+3).\nonumber
\end{align}
The formulas of $\theta_{6,6}(1,0,\dots,0,a)$ for $5\le p\le 97$ are given in Appendix~A2. The following is Theorem~1.2 specified for $n=6$:

\begin{thm}\label{T3.1}
Assume that $f=X^6+X^5+a\in\f_q[X]$ is irreducible, where $\text{\rm char}\,\f_q=p\ge 5$. If $\prod_{m=2,3,6}\theta_{6,m}(1,0,\dots,0,a)\ne 0$, then $f$ is normal over $\f_q$.
\end{thm}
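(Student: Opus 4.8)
The plan is to derive Theorem~\ref{T3.1} as a direct specialization of Theorem~\ref{T1.2} to the case $n=6$. Since $\mathrm{char}\,\f_q=p\ge 5$, we have $p\nmid 6$, so the divisors $m\mid 6$ with $m>1$ and $p\nmid m$ are exactly $m=2,3,6$. Thus the hypothesis of Theorem~\ref{T1.2} becomes precisely the condition that $\theta_{6,m}(1,0,\dots,0,a)\ne 0$ for $m=2,3,6$, which is equivalent to $\prod_{m=2,3,6}\theta_{6,m}(1,0,\dots,0,a)\ne 0$. Invoking Theorem~\ref{T1.2}, $f$ is normal over $\f_q$. This is the entire logical content; the theorem is essentially a restatement of Theorem~\ref{T1.2} in a form where the relevant quantities $\theta_{6,2}(1,0,\dots,0,a)$ and $\theta_{6,3}(1,0,\dots,0,a)$ are known explicitly from \eqref{theta62} and \eqref{theta63}, and $\theta_{6,6}(1,0,\dots,0,a)$ is computable in each characteristic via Algorithm~\ref{Alg2.1} (with the sample outputs in \eqref{p=5}, \eqref{p=7}, \eqref{p=11} and Appendix~A2).

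First I would note that there is genuinely nothing to prove beyond tracking the divisor condition: the substantive work — identifying $\Theta_{n,m}$, establishing that $\theta_{n,m}(a_1,\dots,a_n)\ne 0$ forces normality, and the fact that $\theta_{n,1}(1,0,\dots,0,a)=1\ne 0$ so $m=1$ can be omitted — is already carried out in Theorem~\ref{T1.1}, Theorem~\ref{T1.2}, and the Note following it. So the proof reduces to the single observation that $\{m : m\mid 6,\ m>1,\ p\nmid m\}=\{2,3,6\}$ when $p\ge 5$, together with the elementary fact that a product of elements of a field is nonzero if and only if each factor is nonzero.

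Accordingly, the main obstacle is not mathematical at all but expository: one should make clear that the hypothesis $p\ge 5$ is what guarantees $p\nmid m$ for every $m\in\{2,3,6\}$, so that \emph{all} divisors $m>1$ of $6$ are accounted for and none is dropped from the condition. (For $p=2$ or $p=3$ the set of relevant $m$ shrinks and one would get a weaker — indeed different — statement, which is precisely why the hypothesis $p\ge 5$ appears.) Once this is said, the proof is one line.

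\begin{proof}
Since $\mathrm{char}\,\f_q=p\ge 5$, we have $p\nmid 6$, and the divisors $m$ of $6$ with $m>1$ are exactly $m=2,3,6$, none of which is divisible by $p$. Hence the condition ``$\theta_{6,m}(1,0,\dots,0,a)\ne0$ for all $m\mid 6$ with $m>1$ and $p\nmid m$'' of Theorem~\ref{T1.2} is equivalent to $\theta_{6,m}(1,0,\dots,0,a)\ne 0$ for each of $m=2,3,6$, which in turn is equivalent to $\prod_{m=2,3,6}\theta_{6,m}(1,0,\dots,0,a)\ne 0$. Therefore, by Theorem~\ref{T1.2}, $f$ is normal over $\f_q$.
\end{proof}
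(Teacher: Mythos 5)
Your proof is correct and matches the paper exactly: the paper introduces Theorem~\ref{T3.1} simply as ``Theorem~\ref{T1.2} specified for $n=6$,'' and your observation that $p\ge 5$ forces the relevant divisor set to be $\{2,3,6\}$, with the product condition equivalent to each factor being nonzero, is the entire content of that specialization.
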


For example, when $p=5$, Theorem~\ref{T3.1} states that if $f=X^6+X^5+a\in\f_q[X]$ is irreducible and
\[
a^2+4a+2\ne0,
\]
then $f$ is normal over $\f_q$.

\medskip

When $p=7$, $\theta_{6,2}(1,0,\dots,0,a)=4(a+4)^2$ and $\theta_{6,3}(1,0,\dots,0,a)= a^2+4a+1$. If $f=X^6+X^5+a\in\f_q[X]$ is irreducible and
\[
\begin{cases}
a\ne 3,\cr
a^2+4a+1\ne0,\cr
3 a^{14}+4 a^{13}+6 a^{12}+a^{11}+4 a^{10}+a^8+4 a^7+3 a^5+4 a^4+6 a^2+a+1 \ne 0,
\end{cases}
\]
then $f$ is normal over $\f_q$. 

\medskip

When $p=11$, $\theta_{6,2}(1,0,\dots,0,a)=(a+10)^2$ and $\theta_{6,3}(1,0,\dots,0,a)=2 (a+7)(a^2+10a+4)$. If $f=X^6+X^5+a\in\f_q[X]$ is irreducible and
\[
\begin{cases}
a\ne 1,4,\cr
a^2+10a+4\ne0,\cr
4 (a^{19}+3 a^{18}+10 a^{17}+5 a^{16}+7 a^{15}+4 a^{14}+4 a^{13}+6 a^{12}+8
   a^{11}+10 a^{10}\cr
   +4 a^9+10 a^8+7 a^7+5 a^6+5 a^5+8 a^4+2 a^3+a^2+2 a+3)\ne 0,
\end{cases}
\]
then $f$ is normal over $\f_q$. 

\medskip

To conclude this section, we include the intermediate data in the computations of \eqref{p=5}, \eqref{p=7} and \eqref{p=11}. We follow the notation of Algorithm~\ref{Alg2.1}

\medskip

\noindent $\bullet$ $p=5$.

\medskip
Minimal polynomials of $u_i$:

\[
\renewcommand*{\arraystretch}{1.4}
\begin{tabular}{c|c|c}
\hline
$i$ &$k_i$ & $g_i(X)$\\ \hline
$1$& 2&$X^2+2$\\ 
$2$& 3&$X^3+X^2+2$\\
$3$& 3&$X^3+2 X^2+2 X+2$\\
$4$& 3&$X^3+2 X^2+4 X+2$\\
$5$& 2&$X^2+2 X+3$\\
$6$& 3&$X^3+3 X^2+2 X+3$\\
$7$& 3&$X^3+3 X+3$\\
$8$& 3&$X^3+4 X+3$\\
\hline
\end{tabular}
\]

\medskip
The matrices $[A(u_i)\ C(u_i)]$:

\medskip
$i=1$.
\[
\arraycolsep=1.6pt
\begin{array}{cccccccccccccccccccccc}
 1 & 0 & 3 & 0 & 4 & 0 & 2 & 0 & 1 & 0 & 3 & 0 & 4 & 0 & 2 & 0 & 1 & 0 & 3 & 0 & 4 &\quad 0\\
 0 & 1 & 0 & 3 & 0 & 4 & 0 & 2 & 0 & 1 & 0 & 3 & 0 & 4 & 0 & 2 & 0 & 1 & 0 & 3 & 0 &\quad 3\\
\end{array}
\]

\medskip
$i=2$.
\[
\arraycolsep=1.6pt
\begin{array}{cccccccccccccccccccccc}
 1 & 0 & 0 & 3 & 2 & 3 & 1 & 0 & 4 & 4 & 1 & 1 & 1 & 2 & 1 & 2 & 4 & 4 & 2 & 0 & 2 &\quad 0\\
 0 & 1 & 0 & 0 & 3 & 2 & 3 & 1 & 0 & 4 & 4 & 1 & 1 & 1 & 2 & 1 & 2 & 4 & 4 & 2 & 0 &\quad 1\\
 0 & 0 & 1 & 4 & 1 & 2 & 0 & 3 & 3 & 2 & 2 & 2 & 4 & 2 & 4 & 3 & 3 & 4 & 0 & 4 & 3 &\quad 0\\
\end{array}
\]

\medskip
$i=3$.
\[
\arraycolsep=1.6pt
\begin{array}{cccccccccccccccccccccc}
 1 & 0 & 0 & 3 & 4 & 1 & 4 & 2 & 1 & 1 & 2 & 2 & 0 & 2 & 2 & 2 & 3 & 1 & 3 & 1 & 0 &\quad 4\\
 0 & 1 & 0 & 3 & 2 & 0 & 0 & 1 & 3 & 2 & 3 & 4 & 2 & 2 & 4 & 4 & 0 & 4 & 4 & 4 & 1 &\quad 4\\
 0 & 0 & 1 & 3 & 2 & 3 & 4 & 2 & 2 & 4 & 4 & 0 & 4 & 4 & 4 & 1 & 2 & 1 & 2 & 0 & 4 &\quad 3\\
\end{array}
\]

\medskip
$i=4$.
\[
\arraycolsep=1.6pt
\begin{array}{cccccccccccccccccccccc}
 1 & 0 & 0 & 3 & 4 & 0 & 3 & 1 & 1 & 3 & 3 & 0 & 2 & 0 & 2 & 2 & 3 & 2 & 0 & 1 & 4 &\quad  0\\
 0 & 1 & 0 & 1 & 1 & 4 & 1 & 0 & 3 & 2 & 4 & 3 & 4 & 2 & 4 & 1 & 3 & 2 & 2 & 2 & 4 &\quad  0\\
 0 & 0 & 1 & 3 & 0 & 1 & 2 & 2 & 1 & 1 & 0 & 4 & 0 & 4 & 4 & 1 & 4 & 0 & 2 & 3 & 1 &\quad  2\\
\end{array}
\]

\medskip
$i=5$.
\[
\arraycolsep=1.6pt
\begin{array}{cccccccccccccccccccccc}
 1 & 0 & 2 & 1 & 2 & 3 & 3 & 0 & 1 & 3 & 1 & 4 & 4 & 0 & 3 & 4 & 3 & 2 & 2 & 0 & 4 &\quad 0\\
 0 & 1 & 3 & 1 & 4 & 4 & 0 & 3 & 4 & 3 & 2 & 2 & 0 & 4 & 2 & 4 & 1 & 1 & 0 & 2 & 1 &\quad 4\\
\end{array}
\]

\medskip
$i=6$.
\[
\arraycolsep=1.6pt
\begin{array}{cccccccccccccccccccccc}
 1 & 0 & 0 & 2 & 4 & 4 & 4 & 3 & 1 & 4 & 2 & 3 & 0 & 3 & 2 & 3 & 3 & 4 & 3 & 4 & 0 &\quad 0\\
 0 & 1 & 0 & 3 & 3 & 0 & 0 & 1 & 2 & 2 & 2 & 4 & 3 & 2 & 1 & 4 & 0 & 4 & 1 & 4 & 4 &\quad 1\\
 0 & 0 & 1 & 2 & 2 & 2 & 4 & 3 & 2 & 1 & 4 & 0 & 4 & 1 & 4 & 4 & 2 & 4 & 2 & 0 & 4 &\quad 1\\
\end{array}
\]

\medskip
$i=7$.
\[
\arraycolsep=1.6pt
\begin{array}{cccccccccccccccccccccc}
 1 & 0 & 0 & 2 & 0 & 4 & 4 & 3 & 1 & 4 & 3 & 0 & 4 & 1 & 3 & 0 & 3 & 1 & 1 & 3 & 4 &\quad 3\\
 0 & 1 & 0 & 2 & 2 & 4 & 3 & 2 & 4 & 0 & 2 & 3 & 4 & 0 & 4 & 3 & 3 & 4 & 2 & 4 & 2 &\quad 4\\
 0 & 0 & 1 & 0 & 2 & 2 & 4 & 3 & 2 & 4 & 0 & 2 & 3 & 4 & 0 & 4 & 3 & 3 & 4 & 2 & 4 &\quad 4\\
\end{array}
\]

\medskip
$i=8$.
\[
\arraycolsep=1.6pt
\begin{array}{cccccccccccccccccccccc}
 1 & 0 & 0 & 2 & 0 & 2 & 4 & 2 & 3 & 0 & 2 & 1 & 2 & 0 & 4 & 4 & 4 & 2 & 2 & 0 & 1 &\quad 1\\
 0 & 1 & 0 & 1 & 2 & 1 & 4 & 0 & 1 & 3 & 1 & 0 & 2 & 2 & 2 & 1 & 1 & 0 & 3 & 2 & 3 &\quad 0\\
 0 & 0 & 1 & 0 & 1 & 2 & 1 & 4 & 0 & 1 & 3 & 1 & 0 & 2 & 2 & 2 & 1 & 1 & 0 & 3 & 2 &\quad 3\\
\end{array}
\]

\medskip

\noindent $\bullet$ $p=7$.

\medskip
Minimal polynomials of $u_i$:

\[
\renewcommand*{\arraystretch}{1.4}
\begin{tabular}{c|c|c}
\hline
$i$ &$k_i$ & $g_i(X)$\\ \hline

$1$& $3$& $X^3+5 X^2+X+1$\\
$2$& $3$& $X^3+2 X+1$\\
$3$& $2$& $X^2+3 X+1$\\
$4$& $3$& $X^3+4 X+1$\\
$5$& $3$& $X^3+6 X^2+4 X+1$\\
$6$& $2$& $X^2+2 X+2$\\
$7$& $3$& $X^3+3 X^2+2 X+2$\\
$8$& $3$& $X^3+2 X^2+3$\\

\hline
\end{tabular}
\]

\medskip
The matrices $[A(u_i)\ C(u_i)]$:

\medskip
$i=1$.
\[
\arraycolsep=1.6pt
\begin{array}{ccccccccccccccccccccccc}
 1 & 0 & 0 & 6 & 5 & 4 & 4 & 6 & 4 & 5 & 0 & 5 & 5 & 5 & 0 & 4 & 3 & 2 & 4 & 3 & 0 & \quad 4 \\
 0 & 1 & 0 & 6 & 4 & 2 & 1 & 3 & 3 & 2 & 5 & 5 & 3 & 3 & 5 & 4 & 0 & 5 & 6 & 0 & 3 & \quad 3 \\
 0 & 0 & 1 & 2 & 3 & 3 & 1 & 3 & 2 & 0 & 2 & 2 & 2 & 0 & 3 & 4 & 5 & 3 & 4 & 0 & 0 & \quad 2 \\
\end{array}
\]

\medskip
$i=2$.
\[
\arraycolsep=1.6pt
\begin{array}{ccccccccccccccccccccccc}
 1 & 0 & 0 & 6 & 0 & 2 & 1 & 3 & 3 & 0 & 5 & 4 & 4 & 1 & 2 & 1 & 2 & 3 & 2 & 6 & 0 & \quad 3 \\
 0 & 1 & 0 & 5 & 6 & 4 & 4 & 0 & 2 & 3 & 3 & 6 & 5 & 6 & 5 & 4 & 5 & 1 & 0 & 0 & 6 & \quad 0 \\
 0 & 0 & 1 & 0 & 5 & 6 & 4 & 4 & 0 & 2 & 3 & 3 & 6 & 5 & 6 & 5 & 4 & 5 & 1 & 0 & 0 & \quad 2 \\
\end{array}
\]

\medskip
$i=3$.
\[
\arraycolsep=1.6pt
\begin{array}{ccccccccccccccccccccccc}
 1 & 0 & 6 & 3 & 6 & 0 & 1 & 4 & 1 & 0 & 6 & 3 & 6 & 0 & 1 & 4 & 1 & 0 & 6 & 3 & 6 & \quad 4 \\
 0 & 1 & 4 & 1 & 0 & 6 & 3 & 6 & 0 & 1 & 4 & 1 & 0 & 6 & 3 & 6 & 0 & 1 & 4 & 1 & 0 & \quad 5 \\
\end{array}
\]

\medskip
$i=4$.
\[
\arraycolsep=1.6pt
\begin{array}{ccccccccccccccccccccccc}
 1 & 0 & 0 & 6 & 0 & 4 & 1 & 5 & 6 & 0 & 6 & 1 & 4 & 4 & 4 & 1 & 1 & 6 & 2 & 3 & 0 & \quad 4 \\
 0 & 1 & 0 & 3 & 6 & 2 & 1 & 0 & 1 & 6 & 3 & 3 & 3 & 6 & 6 & 1 & 5 & 4 & 0 & 0 & 3 & \quad 2 \\
 0 & 0 & 1 & 0 & 3 & 6 & 2 & 1 & 0 & 1 & 6 & 3 & 3 & 3 & 6 & 6 & 1 & 5 & 4 & 0 & 0 & \quad 3 \\
\end{array}
\]

\medskip
$i=5$.
\[
\arraycolsep=1.6pt
\begin{array}{ccccccccccccccccccccccc}
 1 & 0 & 0 & 6 & 6 & 3 & 1 & 4 & 4 & 1 & 2 & 1 & 6 & 0 & 3 & 4 & 6 & 1 & 1 & 5 & 0 & \quad 3 \\
 0 & 1 & 0 & 3 & 2 & 4 & 0 & 3 & 6 & 1 & 2 & 6 & 4 & 6 & 5 & 5 & 0 & 3 & 5 & 0 & 5 & \quad 4 \\
 0 & 0 & 1 & 1 & 4 & 6 & 3 & 3 & 6 & 5 & 6 & 1 & 0 & 4 & 3 & 1 & 6 & 6 & 2 & 0 & 0 & \quad 3 \\
\end{array}
\]

\medskip
$i=6$.
\[
\arraycolsep=1.6pt
\begin{array}{ccccccccccccccccccccccc}
 1 & 0 & 5 & 4 & 3 & 0 & 1 & 5 & 2 & 0 & 3 & 1 & 6 & 0 & 2 & 3 & 4 & 0 & 6 & 2 & 5 & \quad 1 \\
 0 & 1 & 5 & 2 & 0 & 3 & 1 & 6 & 0 & 2 & 3 & 4 & 0 & 6 & 2 & 5 & 0 & 4 & 6 & 1 & 0 & \quad 5 \\
\end{array}
\]

\medskip
$i=7$.
\[
\arraycolsep=1.6pt
\begin{array}{ccccccccccccccccccccccc}
 1 & 0 & 0 & 5 & 6 & 0 & 6 & 5 & 1 & 3 & 0 & 6 & 4 & 4 & 3 & 3 & 5 & 1 & 2 & 3 & 6 & \quad 3 \\
 0 & 1 & 0 & 5 & 4 & 6 & 6 & 4 & 6 & 4 & 3 & 6 & 3 & 1 & 0 & 6 & 1 & 6 & 3 & 5 & 2 & \quad 6 \\
 0 & 0 & 1 & 4 & 0 & 4 & 1 & 3 & 2 & 0 & 4 & 5 & 5 & 2 & 2 & 1 & 3 & 6 & 2 & 4 & 0 & \quad 6 \\
\end{array}
\]

\medskip
$i=8$.
\[
\arraycolsep=1.6pt
\begin{array}{ccccccccccccccccccccccc}
 1 & 0 & 0 & 4 & 6 & 2 & 5 & 0 & 1 & 4 & 6 & 6 & 4 & 2 & 6 & 4 & 0 & 3 & 3 & 1 & 3 & \quad 0 \\
 0 & 1 & 0 & 0 & 4 & 6 & 2 & 5 & 0 & 1 & 4 & 6 & 6 & 4 & 2 & 6 & 4 & 0 & 3 & 3 & 1 & \quad 2 \\
 0 & 0 & 1 & 5 & 4 & 3 & 0 & 2 & 1 & 5 & 5 & 1 & 4 & 5 & 1 & 0 & 6 & 6 & 2 & 6 & 5 & \quad 3 \\
\end{array}
\]

\medskip

\noindent $\bullet$ $p=11$.

\medskip
Minimal polynomials of $u_i$:

\[
\renewcommand*{\arraystretch}{1.4}
\begin{tabular}{c|c|c}
\hline
$i$ &$k_i$ & $g_i(X)$\\ \hline

$1$& $3$& $X^3+5 X^2+1$\\
$2$& $3$& $X^3+7 X^2+1$\\
$3$& $3$& $X^3+3 X^2+5 X+1$\\
$4$& $3$& $X^3+6 X^2+5 X+1$\\
$5$& $3$& $X^3+4 X^2+6 X+1$\\
$6$& $3$& $X^3+8 X^2+6 X+1$\\
$7$& $3$& $X^3+8 X+1$\\

\hline
\end{tabular}
\]

\medskip
The matrices $[A(u_i)\ C(u_i)]$:

\medskip
$i=1$.
\[
\arraycolsep=2.8pt
\begin{array}{ccccccccccccccccccccccc}
 1 & 0 & 0 & 10 & 5 & 8 & 5 & 3 & 10 & 0 & 8 & 5 & 8 & 7 & 4 & 5 & 1 & 2 & 7 & 8 & 2 & \quad 0 \\
 0 & 1 & 0 & 0 & 10 & 5 & 8 & 5 & 3 & 10 & 0 & 8 & 5 & 8 & 7 & 4 & 5 & 1 & 2 & 7 & 8 & \quad 3 \\
 0 & 0 & 1 & 6 & 3 & 6 & 8 & 1 & 0 & 3 & 6 & 3 & 4 & 7 & 6 & 10 & 9 & 4 & 3 & 9 & 6 & \quad 10 \\
\end{array}
\]

\medskip
$i=2$.
\[
\arraycolsep=3pt
\begin{array}{ccccccccccccccccccccccc}
 1 & 0 & 0 & 10 & 7 & 6 & 3 & 5 & 3 & 9 & 9 & 0 & 2 & 10 & 7 & 4 & 6 & 6 & 9 & 8 & 4 & \quad 3 \\
 0 & 1 & 0 & 0 & 10 & 7 & 6 & 3 & 5 & 3 & 9 & 9 & 0 & 2 & 10 & 7 & 4 & 6 & 6 & 9 & 8 & \quad 8 \\
 0 & 0 & 1 & 4 & 5 & 8 & 6 & 8 & 2 & 2 & 0 & 9 & 1 & 4 & 7 & 5 & 5 & 2 & 3 & 7 & 4 & \quad 1 \\
\end{array}
\]

\medskip
$i=3$.
\[
\arraycolsep=2.75pt
\begin{array}{ccccccccccccccccccccccc}
 1 & 0 & 0 & 10 & 3 & 7 & 9 & 1 & 0 & 8 & 8 & 2 & 1 & 1 & 1 & 2 & 10 & 3 & 5 & 4 & 4 & \quad 5 \\
 0 & 1 & 0 & 6 & 3 & 5 & 8 & 3 & 1 & 7 & 4 & 7 & 7 & 6 & 6 & 0 & 8 & 3 & 6 & 3 & 2 & \quad 7 \\
 0 & 0 & 1 & 8 & 4 & 2 & 10 & 0 & 3 & 3 & 9 & 10 & 10 & 10 & 9 & 1 & 8 & 6 & 7 & 7 & 4 & \quad 3 \\
\end{array}
\]

\medskip
$i=4$.
\[
\arraycolsep=2.4pt
\begin{array}{ccccccccccccccccccccccc}
 1 & 0 & 0 & 10 & 6 & 2 & 3 & 10 & 0 & 2 & 0 & 1 & 3 & 10 & 1 & 7 & 9 & 9 & 4 & 10 & 10 & \quad 7 \\
 0 & 1 & 0 & 6 & 7 & 5 & 6 & 9 & 10 & 10 & 2 & 5 & 5 & 9 & 4 & 3 & 8 & 10 & 7 & 10 & 5 & \quad 8 \\
 0 & 0 & 1 & 5 & 9 & 8 & 1 & 0 & 9 & 0 & 10 & 8 & 1 & 10 & 4 & 2 & 2 & 7 & 1 & 1 & 4 & \quad 1 \\
\end{array}
\]

\medskip
$i=5$.
\[
\arraycolsep=2pt
\begin{array}{ccccccccccccccccccccccc}
 1 & 0 & 0 & 10 & 4 & 1 & 6 & 10 & 0 & 0 & 1 & 7 & 10 & 5 & 1 & 0 & 0 & 10 & 4 & 1 & 6 & \quad 9 \\
 0 & 1 & 0 & 5 & 1 & 10 & 4 & 0 & 10 & 0 & 6 & 10 & 1 & 7 & 0 & 1 & 0 & 5 & 1 & 10 & 4 & \quad 9 \\
 0 & 0 & 1 & 7 & 10 & 5 & 1 & 0 & 0 & 10 & 4 & 1 & 6 & 10 & 0 & 0 & 1 & 7 & 10 & 5 & 1 & \quad 1 \\
\end{array}
\]

\medskip
$i=6$.
\[
\arraycolsep=3pt
\begin{array}{ccccccccccccccccccccccc}
 1 & 0 & 0 & 10 & 8 & 8 & 10 & 7 & 8 & 5 & 4 & 7 & 3 & 7 & 7 & 9 & 0 & 5 & 6 & 10 & 0 & \quad 7 \\
 0 & 1 & 0 & 5 & 3 & 1 & 2 & 8 & 0 & 5 & 7 & 2 & 3 & 1 & 5 & 6 & 9 & 8 & 8 & 0 & 10 & \quad 2 \\
 0 & 0 & 1 & 3 & 3 & 1 & 4 & 3 & 6 & 7 & 4 & 8 & 4 & 4 & 2 & 0 & 6 & 5 & 1 & 0 & 0 & \quad 5 \\
\end{array}
\]

\medskip
$i=7$.
\[
\arraycolsep=2.4pt
\begin{array}{ccccccccccccccccccccccc}
 1 & 0 & 0 & 10 & 0 & 8 & 1 & 2 & 6 & 5 & 5 & 9 & 10 & 0 & 10 & 1 & 8 & 4 & 1 & 4 & 10 & \quad 9 \\
 0 & 1 & 0 & 3 & 10 & 9 & 5 & 6 & 6 & 2 & 1 & 0 & 1 & 10 & 3 & 7 & 10 & 7 & 1 & 0 & 7 & \quad 8 \\
 0 & 0 & 1 & 0 & 3 & 10 & 9 & 5 & 6 & 6 & 2 & 1 & 0 & 1 & 10 & 3 & 7 & 10 & 7 & 1 & 0 & \quad 9 \\
\end{array}
\]

\section{The Case $n=7$}

For $n=7$, Theorem~\ref{T1.2} becomes

\begin{thm}\label{T4.1}
Assume that $f=X^7+X^6+a\in\f_q[X]$ is irreducible, where $\text{\rm char}\,\f_q=p\ne 7$. If $\theta_{7,7}(1,0,\dots,0,a)\ne0$, then $f$ is normal over $\f_q$.
\end{thm}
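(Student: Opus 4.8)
The plan is to apply Theorem~\ref{T1.2} with $n=7$ and then make it effective by computing the single relevant polynomial $\theta_{7,7}(1,0,\dots,0,a)\in\f_p[a]$. Since $7$ is prime, the only divisors $m\mid 7$ with $m>1$ are $m=7$ itself; and by the Note following Theorem~\ref{T1.2}, $m=1$ contributes nothing. Hence the hypothesis ``$\theta_{n,m}(1,0,\dots,0,a)\ne 0$ for all $m\mid n$ with $m>1$ and $p\nmid m$'' collapses, when $p\ne 7$, to the single condition $\theta_{7,7}(1,0,\dots,0,a)\ne 0$. This is exactly the statement of Theorem~\ref{T4.1}, so the theorem is an immediate specialization of Theorem~\ref{T1.2}, which in turn rests on Theorem~\ref{T1.1} from \cite{Hou-preprint}. (When $p=7$, there is no $m\mid 7$ with $p\nmid m$ and $m>1$, so Theorem~\ref{T1.2} gives no information; this is why the hypothesis $p\ne 7$ appears.)

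The substance, then, is not the logical derivation but producing usable formulas for $\theta_{7,7}(1,0,\dots,0,a)$ in each characteristic $p\ne 7$ of interest, which is where Algorithm~\ref{Alg2.1} enters. First I would record that $\Theta_{7,7}$ is a homogeneous symmetric polynomial of degree $d=6\cdot\varphi(7)=6\cdot 6=36$ in $X_0,\dots,X_6$ (it is a product of $120$ homogeneous factors of degree $6$; see Appendix~A1), so that $n=7$, $d=36$, and $\lfloor d/n\rfloor=\lfloor 36/7\rfloor=5$. Thus $\theta_{7,7}(1,0,\dots,0,a)=\sum_{j=0}^{5}b_j a^j$ has at most six unknown coefficients $b_0,\dots,b_5\in\f_p$, and by the Note after Theorem~\ref{T1.2} we already know $b_0=\theta_{7,7}(1,0,\dots,0,0)$ need not be $1$ here (unlike $\theta_{n,1}$), so all six coefficients must be found. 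Then I would run Algorithm~\ref{Alg2.1}: choose elements $u_i\in\overline\f_p$ of degrees $k_i$ over $\f_p$ with distinct minimal polynomials $g_i$ and with $\sum k_i\ge 6$, verify via the stated irreducibility criterion that $f_i=X^7-X^6+(-1)^7 u_i=X^7-X^6-u_i$ is irreducible over $\f_{p^{k_i}}$, compute the matrices $A(u_i)$ by reducing $u^j$ ($0\le j\le 5$) mod $g_i$, compute $C(u_i)$ by evaluating $\Theta_{7,7}$ at the roots $r_j=X^{p^{k_i j}}\bmod (f_i(X),g_i(u))$ and reducing, and solve the linear system \eqref{LinSys}. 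Lemma~\ref{L2.2} guarantees the coefficient matrix has full rank $\lfloor d/n\rfloor+1=6$, so the solution $(b_0,\dots,b_5)$ is unique. Repeating this for each prime $p$ in the desired range produces the tables of $\theta_{7,7}(1,0,\dots,0,a)$, and then Theorem~\ref{T4.1} yields explicit normality criteria for $f=X^7+X^6+a$ of the same flavor as Theorem~\ref{T3.1}.

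The main obstacle is computational rather than conceptual: unlike $\Theta_{6,6}$, whose $60$ factors are only quadratic, $\Theta_{7,7}$ is a product of $120$ sextic forms in seven variables, so evaluating $\Theta_{7,7}(r_0,\dots,r_{6})$ symbolically in $\f_{p^{k_i}}[u]/(g_i(u))$ for enough sample polynomials — and doing so for many primes $p$ — is where the effort and the risk of error lie. In practice one mitigates this by evaluating the $120$ factors at the explicit roots (themselves given as low-degree polynomials in the root symbol modulo $f_i$ and in $u$ modulo $g_i$) and multiplying, exploiting that each $C(u_i)$ lands in $\f_p^{k_i}$ because $\Theta_{7,7}$ is symmetric; one also wants to pick the $u_i$ of small degree to keep $k_i$ low while still reaching $\sum k_i\ge 6$ (e.g.\ three cubics, or two cubics, or combinations with quadratics), and to use extra sample polynomials beyond the minimum as a consistency check on the solution. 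A secondary point worth remarking on, as in \cite[Remark~4.2]{Hou-preprint}, is that in characteristic $p$ one may expect $\theta_{7,7}(1,0,\dots,0,a)$ to factor with multiplicities tied to $p$ (compare the clean $p=5$ and $p=7$ factorizations of $\theta_{6,6}$ in \eqref{p=5}, \eqref{p=7}); noting such structure both validates the output and compresses the resulting normality criterion.
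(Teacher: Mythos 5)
Your first paragraph is exactly the paper's (implicit) proof: since $7$ is prime, the only divisor $m\mid 7$ with $m>1$ is $m=7$, the condition $p\nmid m$ becomes $p\ne 7$, and Theorem~\ref{T4.1} is the immediate specialization of Theorem~\ref{T1.2}. The paper offers nothing more than this, so the logical content of your proposal is correct and identical in approach.

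However, your degree bookkeeping for the computational part is wrong, and the error is not cosmetic. You state that $\Theta_{7,7}$ is a product of $120$ homogeneous sextic forms and then conclude $d=6\cdot\varphi(7)=36$; but a product of $120$ factors each of degree $6$ has degree $d=120\cdot 6=720$ (the $6=\varphi(7)$ is the degree of a single $\Psi_7$, not of $\Theta_{7,7}$). Hence $\lfloor d/n\rfloor=\lfloor 720/7\rfloor=102$, and there are $103$ unknown coefficients $b_0,\dots,b_{102}$, not six. This is corroborated by the paper's own data: the computed $\theta_{7,7}(1,0,\dots,0,a)$ for $p=5$ has degree $3+3+89=95$ in $a$, far exceeding your bound of $5$, and the fifteen sample polynomials in Appendix~A4 have $\sum k_i=106\ge 103$, which would be absurd overkill if only $\sum k_i\ge 6$ were needed. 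With your choice of samples the linear system \eqref{LinSys} would be massively underdetermined (rank $6$ against $103$ unknowns by Lemma~\ref{L2.2}), so the algorithm as you describe it would not produce $\theta_{7,7}(1,0,\dots,0,a)$. The fix is simply to replace $36$ by $720$ and $\lfloor d/n\rfloor=5$ by $102$ throughout; the rest of your description of Algorithm~\ref{Alg2.1} then goes through as in the paper.
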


Using Algorithm~\ref{Alg2.1}, we have computed $\theta_{7,7}(1,0,\dots,0,a)$ for $p\le 47$ ($p\ne 7$). (The algorithm can be implemented for larger $p$ with only incremental complexity.) When $p=2$,
\begin{align*}
&\theta_{7,7}(1,0,\dots,0,a)=\cr
&(a^4+a^3+a^2+a+1)^2 (a^5+a^4+a^3+a+1)^2
     (a^8+a^7+a^2+a+1)^2\cr
& (a^8+a^7+a^4+a^3+a^2+a+1)^2
     (a^{10}+a^8+a^7+a^6+a^5+a^2+1)^2.
\end{align*}
When $p=3$,
\begin{align*}
\theta_{7,7}(1,0,\dots,0,a)=\,
&2 (a+2)^{18} (a^9+2 a^6+2 a^5+2 a^4+a^3+a+2)^3\cr
& (a^{10}+2a^9+a^8+a^6+2 a^4+2 a^3+1)^3.
\end{align*}
When $p=5$,
\begin{align*}
&\theta_{7,7}(1,0,\dots,0,a)=\cr
&(a+1)^3 (a^3+4 a^2+3 a+4) (a^{89}+4 a^{88}+4 a^{87}+3 a^{86}+2 a^{85}+3
   a^{83}+3 a^{82}+4 a^{81}\cr
   &+4 a^{80}+2 a^{79}+4 a^{78}+3 a^{77}+2 a^{76}+a^{75}+3
   a^{74}+2 a^{73}+4 a^{72}+a^{70}+3 a^{69}+4 a^{68}\cr
   &+2 a^{67}+a^{66}+2 a^{65}+2 a^{64}+4
   a^{62}+2 a^{61}+4 a^{60}+4 a^{59}+a^{58}+a^{56}+a^{55}+3 a^{53}\cr
   &+3 a^{51}+4 a^{50}+2
   a^{49}+2 a^{47}+2 a^{46}+3 a^{45}+2 a^{44}+a^{43}+4 a^{42}+a^{41}+4 a^{40}+2 a^{38}\cr
   &+4a^{37}+4 a^{36}+2 a^{35}+2 a^{34}+3 a^{33}+3 a^{32}+2 a^{31}+3
   a^{30}+a^{29}+a^{28}+a^{27}+2 a^{26}\cr
   &+2 a^{25}+a^{24}+4 a^{23}+3 a^{22}+4
   a^{21}+a^{20}+2 a^{16}+3 a^{15}+2 a^{14}+3 a^{13}+2 a^{12}+a^{11}\cr
   &+a^{10}+3 a^9+4 a^6+2
   a^5+2 a^4+2 a^3+a+4).
\end{align*}
The intermediate data in the computation of $\theta_{7,7}(1,0,\dots,0,a)$ with $p=2$ are included in Appendix A4.

Theorems~\ref{T3.1} and \ref{T4.1} are simple criteria for the normality of irreducible polynomials of the form $X^n+X^{n-1}+a\in\f_q[X]$ ($n=6,7$). Are the sufficient conditions in Theorems~\ref{T3.1} and \ref{T4.1} close to being necessary? Let
\begin{align}\label{Irr}
\text{Irr}\,(q,n)=\,&\text{the number of irreducible polynomials}\\
&\text{of the form $X^n+X^{n-1}+a\in\f_q[X]$},\nonumber
\end{align}
\begin{align}\label{Norm}
\text{Norm}\,(q,n)=\,&\text{the number of normal polynomials}\\
&\text{of the form $X^n+X^{n-1}+a\in\f_q[X]$},\nonumber
\end{align}
and for $n=6,7$, let 
\begin{align}\label{Suff}
\text{Suff}\,(q,n)=\,&\text{the number of irreducible polynomials of the form}\\ &\text{$X^n+X^{n-1}+a\in\f_q[X]$ satisfying the conditions}\cr
&\text{in Theorem~\ref{T3.1} or \ref{T4.1}}.\nonumber
\end{align}
We computed these three numbers for $n=6,7$, $q\le 49$ and $\text{gcd}(q,n)=1$; see Appendix~A3. In the cases that we computed, $\text{Irr}\,(q,n)=\text{Norm}\,(q,n)$ most of the time and $\text{Norm}\,(q,n)=\text{Suff}\,(q,n)$ all the time. However, since the three numbers in these cases are all quite small, it is difficult to tell whether they are indicative of a general pattern.

\section{Normal Polynomials of a Finite Galois Extension}

Let $G=\{g_1,\dots,g_n\}$ be a finite group. To each $g\in G$ assign an indeterminate $X_g$. The {\em group determinant} of $G$ is
\[
\mathcal D_G(X_{g_1},\dots,X_{g_n})=\det(X_{g_ig_j^{-1}})\in\Bbb Z[X_{g_1},\dots,X_{g_n}],
\]
where $(X_{g_ig_j^{-1}})$ is the $n\times n$ matrix whose $(i,j)$ entry is $X_{g_ig_j^{-1}}$. Note that when $G=\Bbb Z/n\Bbb Z$, 
\[
\mathcal D_{\Bbb Z/n\Bbb Z}(X_0,\dots,X_{n-1})=\Delta_n(X_0,\dots,X_{n-1}).
\]
Let $F$ be any field and $f$ be a monic, separable irreducible polynomial over $F$ such that $F(\alpha)=S_F(f)$, where $\alpha$ is any root of $f$ and $S_F(f)$ demotes the splitting field of $f$ over $F$. Let $G=\text{Aut}(S_F(f)/F)=\{g_1,\dots,g_n\}$ be the Galois group of $f$ over $F$. It is well known that $f$ is normal over $F$ if and only if
\begin{equation}\label{Dg1}
\mathcal D_G(g_1(\alpha)\,\dots,g_n(\alpha))\ne 0.
\end{equation}
(See \cite[Lemma~3.2]{Hou-preprint}.)

Let $\rho_1,\dots,\rho_k$ be the irreducible representations of $G$ with $\deg\rho_i=m_i$. It is a fundamental fact from the representation theory that 
\begin{equation}\label{Dg2}
\mathcal D_G=\prod_{i=1}^k\Bigl(\det\Bigl(\sum_{g\in G}X_g\rho_i(g)\Big)\Bigr)^{m_i}=\prod_{i=1}^k(\det\mathcal X_i)^{m_i},
\end{equation}
where
\begin{equation}\label{Xi}
\mathcal X_i=\sum_{g\in G}X_g\rho_i(g)\in\text{M}_{m_i\times m_i}(\Bbb C).
\end{equation}
In fact, \eqref{Dg2} is the complete factorization of $\mathcal D_G$ in $\Bbb C[X_{g_1},\dots,X_{g_n}]$. For references on group determinants, see \cite{Formanek-Sibley-PAMS-1991, Frobenius-1896a, Frobenius-1896b, Hawkins-1971, Hawkins-1974, Johnson-MPCPS-1991}. The elementary symmetric polynomials in $X_1,\dots,X_m$ can be expressed as a polynomial in the power sums $p_j=X_1^j+\cdots+X_m^j$, $1\le j\le m$, with coefficients in $\Bbb Q$; see \cite[(2.14)$'$]{Macdonald-1995}. In particular, 
\begin{equation}\label{power-sum}
X_1\cdots X_m=(-1)^m\sum_{1t_1+2t_2+\cdots+mt_m=m}\;\prod_{j=1}^m\frac{(-p_j)^{t_j}}{t_j!\,j^{t_j}}.
\end{equation}
If $A$ be an $m\times m$ matrix over $\Bbb C$ with eigenvalues $\lambda_1,\dots,\lambda_m$, then
\[
p_j(\lambda_1,\dots,\lambda_m)=\lambda_1^j+\cdots+\lambda_m^j=\text{Tr}(A^j),
\]
and hence by \eqref{power-sum},
\begin{equation}\label{detA}
\det A=\lambda_1\cdots\lambda_m=(-1)^m\sum_{1t_1+2t_2+\cdots+mt_m=m}\;\prod_{j=1}^m\frac{(-\text{Tr}(A^j))^{t_j}}{t_j!\,j^{t_j}}.
\end{equation}
Therefore, 
\begin{equation}\label{detX}
\det \mathcal X_i=(-1)^{m_i}\sum_{1t_1+2t_2+\cdots+m_it_{m_i}=m_i}\;\prod_{j=1}^{m_i}\frac{(-\text{Tr}(\mathcal X_i^j))^{t_j}}{t_j!\,j^{t_j}}.
\end{equation}
For $1\le j\le m_i$,
\[
\mathcal X_i^j=\sum_{g\in G}\Bigl(\sum_{\substack{h_1,\dots,h_j\in G\cr h_1\dots h_j=g}}X_{h_1}\cdots X_{h_j}\Bigr)\rho_i(g).
\]
Let $\chi_i$ be the character of $\rho_i$. Then 
\begin{equation}\label{Tr}
\text{Tr}(\mathcal X_i^j)=\sum_{g\in G}\Bigl(\sum_{h_1\dots h_j=g}X_{h_1}\cdots X_{h_j}\Bigr)\chi_i(g).
\end{equation}
To summarize, $\mathcal D_G$ is given by \eqref{Dg2} in which $\det \mathcal X_i$ is given by \eqref{detX} and \eqref{Tr}. 

Let 
\begin{equation}\label{Fi}
F_i(X_{g_1},\dots,X_{g_n})=\det\mathcal X_i\in\Bbb C[X_{g_1},\dots,X_{g_n}].
\end{equation}
By \eqref{detX} and \eqref{Tr}, the coefficients of $F_i$ are algebraic numbers, by \eqref{Xi}, $F_i$ is monic in $X_e$, where $e$ is the identity of $G$, and by \eqref{Dg2}, $\prod_{i=1}^kF_i^{m_i}=\mathcal D_G\in\Bbb Z[X_{g_1},\dots,X_{g_n}]$. From these facts it follows that the coefficients of $F_i$ are algebraic integers.  Let $\frak o$ denote the ring of algebraic integers in $\Bbb C$ and let $E_i\in\frak o[X_{g_1},\dots,X_{g_n}]$ be a symmetrization of $F_i$. Let $H_i\in\frak o[s_1,\dots,s_n]$ be the symmetric reduction of $E_i$, i.e.,
\begin{equation}\label{Ei}
E_i(X_{g_1},\dots,X_{g_n})=H_i(s_1,\dots,s_n),
\end{equation}
where $s_i$ is the $i$th elementary symmetric polynomial in $X_{g_1},\dots,X_{g_n}$.
Then we have the following generalization of Theorem~\ref{T1.1}:

\begin{thm}\label{T5.1}
Let $F$ be a field and $f=X^n+a_1X^{n-1}+\cdots+a_n$ be a separable irreducible polynomial over $F$ such that $F(\alpha)=S_F(f)$, where $\alpha$ is any root of $f$. Let $G=\text{\rm Aut}(S_F(f)/F)$. If $H_i(a_1,\dots,a_n)\ne0$ for all $1\le i\le k$, where $H_i$ is defined in \eqref{Ei}, then $f$ is normal over $F$.
\end{thm}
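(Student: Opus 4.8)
The plan is to mimic the proof of Theorem~\ref{T1.1} as described in the excerpt, with $\Delta_n$ replaced by the group determinant $\mathcal D_G$ and the cyclotomic factors $\Psi_m(Y_{n\bullet})$ replaced by the representation-theoretic factors $F_i=\det\mathcal X_i$. First I would set up notation: fix a root $\alpha$ of $f$, write $G=\{g_1,\dots,g_n\}$, and recall from \eqref{Dg1} that $f$ is normal over $F$ if and only if $\mathcal D_G(g_1(\alpha),\dots,g_n(\alpha))\ne0$. By the factorization \eqref{Dg2}, $\mathcal D_G=\prod_{i=1}^kF_i^{m_i}$, so it suffices to show that under the hypothesis $H_i(a_1,\dots,a_n)\ne0$ for all $i$, each factor $F_i(g_1(\alpha),\dots,g_n(\alpha))$ is nonzero.

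The key step is the substitution argument connecting $H_i$ evaluated at the coefficients to $E_i$ (hence $F_i$) evaluated at the conjugates of $\alpha$. Since $f=X^n+a_1X^{n-1}+\cdots+a_n=\prod_{j=1}^n(X-g_j(\alpha))$ (using that $f$ is separable, irreducible, and $F(\alpha)$ is its splitting field, so the $g_j(\alpha)$ are exactly the $n$ distinct roots), the elementary symmetric polynomials satisfy $s_i(g_1(\alpha),\dots,g_n(\alpha))=(-1)^ia_i$. Then I would evaluate the defining identity \eqref{Ei}, $E_i(X_{g_1},\dots,X_{g_n})=H_i(s_1,\dots,s_n)$, at $X_{g_j}=g_j(\alpha)$ to get
\[
E_i(g_1(\alpha),\dots,g_n(\alpha))=H_i(-a_1,a_2,\dots,(-1)^na_n).
\]
(Here one must be slightly careful with the sign convention; the statement as written pairs $H_i(a_1,\dots,a_n)$ directly, so either the $H_i$ already absorb the signs — matching the normalization in \eqref{f} where $f$ is monic with coefficients $a_i$ so that $s_i=(-1)^ia_i$ — or one simply observes $H_i(a_1,\dots,a_n)\ne0$ is the intended shorthand, exactly as in Theorem~\ref{T1.1}. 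I would state this compatibility explicitly at the start of the proof, referencing how Theorem~\ref{T1.1} handled it.) Since $E_i$ is a symmetrization of $F_i$, we have $E_i=P_i\cdot F_i$ for some $P_i\in\frak o[X_{g_1},\dots,X_{g_n}]$, hence $E_i(g_1(\alpha),\dots,g_n(\alpha))=P_i(\cdots)\cdot F_i(g_1(\alpha),\dots,g_n(\alpha))$.

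Putting these together: if $H_i(a_1,\dots,a_n)\ne0$, then $E_i(g_1(\alpha),\dots,g_n(\alpha))\ne0$, so $F_i(g_1(\alpha),\dots,g_n(\alpha))\ne0$; doing this for every $i$ and multiplying (with multiplicities $m_i$) gives $\mathcal D_G(g_1(\alpha),\dots,g_n(\alpha))=\prod_i F_i(g_1(\alpha),\dots,g_n(\alpha))^{m_i}\ne0$, whence $f$ is normal by \eqref{Dg1}. One subtlety to address is that the evaluation $H_i(a_1,\dots,a_n)$ takes place in $F$ (or its relevant subring) while $H_i$ has coefficients in $\frak o\subset\Bbb C$; I would note that all the polynomial identities $E_i=P_iF_i$ and $E_i=H_i(s_1,\dots,s_n)$ are identities in $\frak o[X_{g_1},\dots,X_{g_n}]$, and that specializing the indeterminates to $g_j(\alpha)\in S_F(f)$ requires only a ring homomorphism from the subring of $\frak o$ generated by the relevant coefficients into $S_F(f)$ — which is automatic when $\mathrm{char}\,F=0$, and in positive characteristic reduces via the argument already used implicitly in the finite-field case (the $F_i$ and $E_i$ and $H_i$ have algebraic-integer coefficients, and one reduces modulo a suitable prime, exactly as $\theta_{n,m}$ was taken in characteristic $p$). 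The main obstacle I expect is precisely this bookkeeping of where the coefficients live and why the specialization is legitimate across characteristics; the algebraic core — factor $\mathcal D_G$, multiply through by the symmetrizer, substitute $s_i=(-1)^ia_i$ — is routine and parallels Theorem~\ref{T1.1} line for line.
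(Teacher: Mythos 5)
Your argument is correct and is exactly the proof the paper intends (the paper gives no separate proof of Theorem~\ref{T5.1}; it is implicit in the chain \eqref{Dg1}, \eqref{Dg2}, \eqref{Fi}, \eqref{Ei}): normality reduces to $\mathcal D_G(g_1(\alpha),\dots,g_n(\alpha))\ne0$, each factor $F_i$ divides its symmetrization $E_i$, and $E_i$ evaluated at the conjugates of $\alpha$ equals $H_i$ evaluated at the elementary symmetric functions of the roots, with everything transported to $\mathrm{char}\,F=p$ by reducing $\frak o$ at a place above $p$. The sign point you flag is settled by homogeneity of $E_i$: since $s_i(-X_{g_1},\dots,-X_{g_n})=(-1)^is_i(X_{g_1},\dots,X_{g_n})$, one has $H_i(a_1,\dots,a_n)=\pm H_i(-a_1,a_2,\dots,(-1)^na_n)=\pm E_i(g_1(\alpha),\dots,g_n(\alpha))$, so the two nonvanishing conditions coincide.
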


\noindent{\bf Note.} Since $H_i(s_1,\dots,s_n)\in\frak o[s_1,\dots,s_n]$, $H_i(a_1,\dots,a_n)$ is meaningful regardless of the characteristic of $F$.

\medskip
In Theorem~\ref{T5.1}, the computation of $H_i$ totally depends on the Galois group $G$. Hence, when $G$ is cyclic, the question is no different from the case of finite fields; see \cite[Theorem~3.3]{Hou-preprint}. We will further discuss Theorem~\ref{T5.1} when $G$ is a noncyclic group of order $\le 8$. For this purpose, we need a technical lemma.

Let $m\mid n$ and partition $\Bbb Z/n\Bbb Z$ into blocks $I_i=\{i+mj:0\le j\le n/m-1\}$, $0\le i\le m-1$. Let $S_n$ be the permutation group of $\Bbb Z/n\Bbb Z$. The {\em wreath product} $S_m \kern1pt \wr S_{n/m}$ is the subgroup of $S_n$ consisting of all permutations that are obtained as follows: first permute the blocks $I_1,\dots, I_m$ using a permutation from $S_m$; then permute the elements in each block $I_i$ independently. More precisely,
\[
S_m\wr S_{n/m}=\{(\sigma;\sigma_0,\dots,\sigma_{m-1}):\sigma\in S_m, \sigma_i\in S_{n/m}, 0\le i\le m-1\},
\]
where $(\sigma;\sigma_0,\dots,\sigma_{m-1})$ maps $i+mj$ to $\sigma(i)+m\sigma_i(j)$. A left transversal of $S_m\wr S_{n/m}$ in $S_n$ has been determined in \cite[\S~2]{Hou-preprint}. Let $\mathcal P_{n,m}$ be the set of all unordered partitions of $\{0,1,\dots,n-1\}$ into $m$ parts of size $n/m$. For each $\{P_0,\dots,P_{m-1}\}\in\mathcal P_{n,m}$, choose a permutation $\phi_{\{P_0,\dots,P_{m-1}\}}\in S_n$ which maps $I_i$ to $P_i$, $0\le i\le m-1$. Then
\begin{equation}\label{Enm}
\mathcal E_{n,m}:=\{\phi_{\{P_0,\dots,P_{m-1}\}}:\{P_0,\dots,P_{m-1}\}\in\mathcal P_{n,m}\}
\end{equation}
is a left transversal of $S_m\wr S_{n/m}$ in $S_n$. 

\begin{lem}\label{L5.2}
Let $m\mid n$ with $n\ge 3$. Let $A(Y_0,\dots,Y_{m-1})\in\Bbb C[Y_0,\dots,Y_{m-1}]$ be such that for each $\sigma\in S_m$,
\begin{equation}\label{+-A}
A(Y_{\sigma(0)},\dots,Y_{\sigma(m-1)})=\pm A(Y_0,\dots,Y_{m-1}).
\end{equation}
Let $B(Z_0,\dots,Z_{n/m-1})\in\Bbb C[Z_0,\dots,Z_{n/m-1}]$ be symmetric. Let 
\[
B_i=B(X_i,X_{i+m},\dots,X_{i+(n/m-1)m}),\quad 0\le i\le m-1,
\]
and
\[
F=A(B_0,\dots,B_{m-1})\in\Bbb C[X_0,\dots,X_{n-1}].
\]
Further assume that 
\begin{equation}\label{FX0}
\deg_{X_0}F(X_0,0,\dots,0)>0.
\end{equation}
Then
\[
\prod_{\phi\in\mathcal E_{n,m}}\phi(F)\in\Bbb C[X_0,\dots,X_{n-1}]
\]
is symmetric, where $\mathcal E_{n,m}$ is the left transversal of $S_m\wr S_{n/m}$ in $S_n$ given in \eqref{Enm}. 
\end{lem}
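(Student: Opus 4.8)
The plan is to let $S_n$ act on $\Pi:=\prod_{\phi\in\mathcal E_{n,m}}\phi(F)\in\Bbb C[X_0,\dots,X_{n-1}]$, prove that $\tau(\Pi)=\delta(\tau)\,\Pi$ for every $\tau\in S_n$, where $\delta\colon S_n\to\{\pm1\}$ is a homomorphism, and then prove $\delta\equiv1$; this gives $\tau(\Pi)=\Pi$ for all $\tau$, i.e.\ $\Pi$ is symmetric. For $\pi=\{P_0,\dots,P_{m-1}\}\in\mathcal P_{n,m}$ write $\phi_\pi$ for the corresponding member of $\mathcal E_{n,m}$ in \eqref{Enm}. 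We may assume $A\ne0$, for otherwise $\Pi=0$; then \eqref{FX0} forces $F\ne0$, hence $\Pi\ne0$. Let $\epsilon_0\colon S_m\to\{\pm1\}$ be the sign determined by \eqref{+-A}; it is a homomorphism, hence (for $m\ge2$) either trivial or the sign character $\operatorname{sgn}$ of $S_m$. Extend it to a homomorphism $\epsilon\colon S_m\wr S_{n/m}\to\{\pm1\}$ by $\epsilon(\sigma;\sigma_0,\dots,\sigma_{m-1})=\epsilon_0(\sigma)$. Since $w=(\sigma;\sigma_0,\dots,\sigma_{m-1})$ carries the block $I_i$ onto $I_{\sigma(i)}$ and $B$ is symmetric, $w$ carries $B_i$ to $B_{\sigma(i)}$, so $w(F)=A(B_{\sigma(0)},\dots,B_{\sigma(m-1)})=\epsilon(w)\,F$ by \eqref{+-A}.

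Now fix $\tau\in S_n$. For $\pi\in\mathcal P_{n,m}$, both $\tau\phi_\pi$ and $\phi_{\tau(\pi)}$ carry the blocks onto the parts of the partition $\tau(\pi):=\{\tau(P_0),\dots,\tau(P_{m-1})\}$, so $\phi_{\tau(\pi)}^{-1}\tau\phi_\pi$ preserves the block partition and therefore lies in $S_m\wr S_{n/m}$; denote it $w_{\pi,\tau}$. Then $\tau\phi_\pi=\phi_{\tau(\pi)}\,w_{\pi,\tau}$, so $(\tau\phi_\pi)(F)=\epsilon(w_{\pi,\tau})\,\phi_{\tau(\pi)}(F)$, and since $\pi\mapsto\tau(\pi)$ permutes $\mathcal P_{n,m}$,
\[
\tau(\Pi)=\prod_{\pi\in\mathcal P_{n,m}}\epsilon(w_{\pi,\tau})\,\phi_{\tau(\pi)}(F)=\Bigl(\prod_{\pi\in\mathcal P_{n,m}}\epsilon(w_{\pi,\tau})\Bigr)\Pi=:\delta(\tau)\,\Pi.
\]
Comparing $\tau_1(\tau_2(\Pi))$ with $(\tau_1\tau_2)(\Pi)$ and cancelling $\Pi\ne0$ shows $\delta\colon S_n\to\{\pm1\}$ is a homomorphism, so (as $n\ge2$) $\delta$ is trivial or equals $\operatorname{sgn}$; the heart of the proof is to rule out $\delta=\operatorname{sgn}$.

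Suppose first $m<n$, so every block has size $n/m\ge2$; fix a transposition $\tau=(a\ b)$ with $a,b$ in one block. If $\pi$ keeps $a,b$ in the same part, then $\tau(\pi)=\pi$ and $w_{\pi,\tau}=\phi_\pi^{-1}\tau\phi_\pi$ interchanges $\phi_\pi^{-1}(a)$ and $\phi_\pi^{-1}(b)$, which lie in a common block; hence $w_{\pi,\tau}$ has trivial block part and $\epsilon(w_{\pi,\tau})=1$. If $\pi$ puts $a,b$ in different parts $P,Q$, then $\tau(\pi)\ne\pi$ (otherwise $(P\setminus\{a\})\cup\{b\}=Q$, forcing the nonempty disjoint sets $P\setminus\{a\}$ and $Q\setminus\{b\}$ to coincide), so such $\pi$ fall into pairs $\{\pi,\tau(\pi)\}$; from $\tau\phi_\pi=\phi_{\tau(\pi)}w_{\pi,\tau}$ and $\tau^2=\mathrm{id}$ one gets $w_{\tau(\pi),\tau}=w_{\pi,\tau}^{-1}$, so each such pair contributes $\epsilon(w_{\pi,\tau})^2=1$ to $\delta(\tau)$. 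Therefore $\delta(\tau)=1$, and since transpositions generate $S_n$ and form one conjugacy class, $\delta\equiv1$. Suppose now $m=n$; then $\mathcal P_{n,n}$ has a single element, the partition into singletons, $B_i=B(X_i)$, so $\Pi=\rho(F)$ for some $\rho\in S_n$, and $\Pi$ is symmetric iff $F=A(B(X_0),\dots,B(X_{n-1}))$ is, i.e.\ iff $\epsilon_0$ is trivial. If $\epsilon_0$ were nontrivial it would equal $\operatorname{sgn}$, so $A$ is antisymmetric under interchanging any two of the $Y_i$, whence $\prod_{0\le i<j\le n-1}(Y_i-Y_j)$ divides $A$; then specializing $X_1=\dots=X_{n-1}=0$ in $F$ kills the factor coming from $(i,j)=(1,2)$ (present since $n\ge3$), so $F(X_0,0,\dots,0)=0$, contradicting \eqref{FX0}. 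Hence $\delta\equiv1$ in all cases, and $\Pi$ is symmetric. The main obstacle is the cancellation argument of this last paragraph — choosing a within-block transposition so the ``same part'' contributions are $+1$ and pairing up the remaining ones — together with the observation that the degenerate case $m=n$ must be excluded precisely by hypothesis \eqref{FX0}.
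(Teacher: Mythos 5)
Your proof is correct, and up to the last step it follows the same skeleton as the paper's: both of you observe that left-multiplying the transversal $\mathcal E_{n,m}$ by $\sigma\in S_n$ permutes it up to right factors from $S_m\wr S_{n/m}$, each of which multiplies $F$ by $\pm1$, so that $\sigma$ sends the product to $\pm$ itself; and both of you reduce to checking that the sign is $+1$ for a single transposition (the paper invokes generation by transpositions plus their conjugacy, you make the homomorphism $\delta$ explicit --- the same reduction). Where you genuinely diverge is in that final verification. The paper uses \eqref{FX0} (together with the transitivity of $S_m\wr S_{n/m}$ on the indices, which it leaves implicit) to produce nonzero pure-power terms $a_i'X_i^{t'}$ in the product for every $i$, and then shows that the transpositions $(0,1),(0,2),(1,2)$ cannot all act by $-1$; this is where $n\ge3$ enters. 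You instead evaluate the sign of a transposition $\tau$ directly as $\prod_\pi\epsilon(w_{\pi,\tau})$ and, when $m<n$, kill it combinatorially: partitions keeping $a,b$ in one part contribute $+1$, and the rest pair off into $\{\pi,\tau(\pi)\}$ with mutually inverse contributions; the case $m=n$ is handled separately via the Vandermonde factor of an alternating $A$. Your route is longer but more informative --- it shows that for $m<n$ the conclusion holds with no hypothesis beyond the product being nonzero (and trivially even without that), so that \eqref{FX0} does real work only in the degenerate case $m=n$, where an alternating $A$ would genuinely violate the lemma --- whereas the paper's coefficient comparison is shorter and uniform in $m$, at the cost of the unspelled-out claim $a_0\cdots a_{n-1}\ne0$, which your argument avoids entirely.
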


\begin{proof}
For each $\sigma\in S_n$, $\{\sigma\phi:\phi\in\mathcal E_{n,m}\}$ is also a left transversal of $S_m\wr S_{n.m}$ in $S_n$, whence $\{\sigma\phi:\phi\in\mathcal E_{n,m}\}=\{\phi\sigma_\phi:\phi\in\mathcal E_{n,m}\}$, where each $\sigma_\phi\in S_m\wr S_{n,m}$. Therefore, 
\[
\sigma\Bigl(\prod_{\phi\in\mathcal E_{n,m}}\phi(F)\Bigr)=\prod_{\phi\in\mathcal E_{n,m}}\phi((\sigma_\phi(F)).
\]
By assumption, $\sigma_\phi(F)=\pm F$. Thus
\begin{equation}\label{sign}
\sigma\Bigl(\prod_{\phi\in\mathcal E_{n,m}}\phi(F)\Bigr)=\pm\prod_{\phi\in\mathcal E_{n,m}}\phi(F).
\end{equation}
Since $S_n$ is generated by transpositions and all transpositions are conjugate in $S_n$, to prove that $\prod_{\phi\in\mathcal E_{n,m}}\phi(F)$ is symmetric, it suffices to show that \eqref{sign} holds with a $+$ sign for {\em some} transposition $\sigma$. By \eqref{FX0}, 
\[
F(X_0,\dots,X_{n-1})=a_0X_0^t+\cdots+a_{n-1}X_{n-1}^t+\text{other terms},
\]
where $t>0$, $a_0\cdots a_{n-1}\ne0$ and the ``other terms'' are those of the form $x_i^s$, $s<t$, and those involving at least two variables. It follows that
\[
\prod_{\phi\in\mathcal E_{n,m}}\phi(F)=a_0'X_0^{t'}+\cdots+a_{n-1}'X_{n-1}^{t'}+\text{other terms},
\]
where $t'>0$ and $a_0'\dots a_{n-1}'\ne0$. Assume that
\[
\sigma\Bigl(\prod_{\phi\in\mathcal E_{n,m}}\phi(F)\Bigr)=-\prod_{\phi\in\mathcal E_{n,m}}\phi(F)
\]
for $\sigma=(0,1)$ and $(0,2)$. Then $a_1'=-a_0'=a_2'$. Thus, when $\sigma=(1,2)$, we must have 
\[
\sigma\Bigl(\prod_{\phi\in\mathcal E_{n,m}}\phi(F)\Bigr)=\prod_{\phi\in\mathcal E_{n,m}}\phi(F).
\]
\end{proof}

Now we take a closer look of Theorem~\ref{T5.1} when $G$ is a noncyclic group of order $\le 8$. The character tables of these groups are taken from \cite{James-Liebeck-2001}. In each case, we compute $F_i$ in \eqref{Fi} and its symmetrization $E_i$. Whenever possible, we also compute the symmetric reduction $H_i$ of $E_i$.

\medskip
\noindent{\bf Case 1.} $G=(\Bbb Z/2\Bbb Z)^2$. 

Table~\ref{Tb1} is the character table of $(\Bbb Z/2\Bbb Z)^2$. In general, in the character table of a finite group $G$, the entries of the first row are the representatives of the conjugacy classes of $G$, the entries of the second row are the sizes of the conjugacy classes. $\chi_1,\dots,\chi_k$ are the irreducible characters, $\chi_1$ is the principal character, and $\deg\chi_i=\chi_i(e)$, where $e$ is the identity of $G$. We always have $F_1=\sum_{g\in G}X_g$, $E_1=F_1$ and $H_1=s_1$. Hence we will ignore $F_1$.
\begin{table}[h]
\caption{Character table of $(\Bbb Z/2\Bbb Z)^2$}\label{Tb1}
\vspace{-2em}
\[
\renewcommand*{\arraystretch}{1.2}
\begin{tabular}{c|cccc}
\hline
conj. cls. rep. & (0,0) & (0,1) & (1,0) & (1,1) \\ 
conj. cls. size & 1 & 1& 1& 1\\ \hline
$\chi_1$ & $1$ & $1$ & $1$ & $1$ \\
$\chi_2$ & $1$ & $-1$ & $1$ & $-1$ \\
$\chi_3$ & $1$ & $1$ & $-1$ & $-1$ \\
$\chi_4$ & $1$ & $-1$ & $-1$ & $1$ \\ \hline
\end{tabular}
\]
\end{table}
We have
\begin{align*}
F_2\,&=\sum_{(i,j)\in G}(-1)^jX_{(i,j)},\cr
F_3\,&=\sum_{(i,j)\in G}(-1)^iX_{(i,j)},\cr
F_4\,&=\sum_{(i,j)\in G}(-1)^{i+j}X_{(i,j)}.
\end{align*}
Since $F_2,F_3,F_4$ are in the same $S_4$-orbit, they have the same symmetrizations. We relabel the variables $X_g$ by renaming the group elements $g$:
\[
((0,0),(1,0),(0,1),(1,1))\to (0,2,1,3).
\]
Then $F_2=X_0+X_2-(X_1+X_3)$. Using Lemma~\ref{L5.2} (with $A=Y_0-Y_1$, $B=Z_0+Z_1$), 
we may choose
\[
E_2=\prod_{\phi\in\mathcal E_{4,2}}\phi(F_2)=(X_0+X_1-X_2-X_3)(X_0+X_2-X_1-X_3)(X_0+X_3-X_1-X_2).
\]
The symmetric reduction of $E_2$ is
\[
H_2=s_1^2-4s_1s_2+8s_3.
\]

\medskip
\noindent{\bf Case 2.} $G=S_3$. 

\begin{table}[h]
\caption{Character table of $S_3$}\label{Tb2}
\vspace{-2em}
\[
\renewcommand*{\arraystretch}{1.2}
\begin{tabular}{c|ccc}
\hline
conj. cls. rep. & (1) & (12) & (123)  \\ 
conj. cls. size & 1 & 3& 2\\ \hline
$\chi_1$ & $1$ & $1$ & $1$  \\
$\chi_2$ & $1$ & $-1$ & $1$  \\
$\chi_3$ & $2$ & $0$ & $-1$ \\ \hline
\end{tabular}
\]
\end{table}
Let 
\begin{align*}
G_1\,&=\{(1),(123),(132)\}=A_3,\cr
G_2\,&=\{(12),(23),(13)\}=S_3\setminus A_3.
\end{align*}
We have 
\[
F_2=\sum_{g\in G_1}X_g-\sum_{g\in G_2}X_g.
\]
To compute $F_3$, we first find $\text{Tr}(\mathcal X_3^1)$ and $\text{Tr}(\mathcal X_3^2)$:
\[
\text{Tr}(\mathcal X_3^1)=2X_{(1)}-X_{(123)}-X_{(132)},
\]
\begin{align*}
&\text{Tr}(\mathcal X_3^2)=\sum_{g\in G}\Bigl(\sum_{h_1h_2=g}X_{h_1}X_{h_2}\Bigr)\chi_3(g)\cr
&=2(X_{(1)}^2+X_{(12)}^2+X_{(23)}^2+X_{(13)}^2)-X_{(123)}^2-X_{(132)}^2-2(X_{(1)}X_{(123)}+X_{(1)}X_{(132)})\cr
&\kern1.1em -2(X_{(12)}X_{(13)}+X_{(12)}X_{(23)}+X_{(13)}X_{(23)})+4X_{(123)}X_{(132)}.
\end{align*}
Then
\begin{align*}
F_3\,&=\frac{(-\text{Tr}(\mathcal X_3^1))^2}{2!1^2}+\frac{(-\text{Tr}(\mathcal X_3^2))^1}{1!2^1}\cr
&=\sum_{g\in G_1}X_g^2-\sum_{g\in G_2}X_g^2-\sum_{\substack{\{h_1,h_2\}\subset G_1\cr h_1\ne h_2}}X_{h_1}X_{h_2}+\sum_{\substack{\{h_1,h_2\}\subset G_2\cr h_1\ne h_2}}X_{h_1}X_{h_2}.
\end{align*}
Rename the elements of $G$ such that $G_1=\{0,2,4\}$ and $G_2=\{1,3,5\}$. Then
\[
F_2=X_0+X_2+X_4-(X_1+X_3+X_5),
\]
\begin{align*}
F_3=\,&X_0^2+X_2^2+X_4^2-X_0X_2-X_2X_4-X_0X_4\cr
&-(X_1^2+X_3^2+X_5^2-X_1X_3-X_3X_5-X_1X_5).
\end{align*}
By Lemma~\ref{L5.2}, 
\[
E_2=\prod_{\phi\in\mathcal E_{6,2}}\phi(F_2)=\prod_{(i_1,\dots,i_5)}(X_0+X_{i_2}+X_{i_4}-X_{i_1}-X_{i_3}-X_{i_5})
\]
and 
\[
E_3=\prod_{\phi\in\mathcal E_{6,2}}\phi(F_3)=\prod_{(i_1,\dots,i_5)}F_3(X_0,X_{i_1},\dots,X_{i_5}),
\]
where the range of $(i_1,\dots,i_5)$ is given in \eqref{i1-i5}. The symmetric reductions of $E_2$ and $E_3$ are
\[
\longequation{
H_2=s_1^{10}-12  s_1^8  s_2+24  s_1^7  s_3+48  s_1^6
    s_2^2-16  s_1^6  s_4-192  s_1^5  s_2  s_3-32
    s_1^5  s_5-64  s_1^4  s_2^3+128  s_1^4  s_2
    s_4+192  s_1^4  s_3^2-320  s_1^4  s_6+384  s_1^3
    s_2^2  s_3+256  s_1^3  s_2  s_5-256  s_1^3  s_3
    s_4-256  s_1^2  s_2^2  s_4-768  s_1^2  s_2
    s_3^2+1536  s_1^2  s_2  s_6-256  s_1^2  s_3
    s_5-512  s_1  s_2^2  s_5+1024  s_1  s_2  s_3
    s_4+512  s_1  s_3^3-2048  s_1  s_3  s_6-1024
    s_2^2  s_6+1024  s_2  s_3  s_5-1024  s_3^2
    s_4+4096  s_4  s_6-1024  s_5^2
}
\]
and
\[
\longequation{
H_3=
   s_1^{20}-24  s_2  s_1^{18}+33  s_3  s_1^{17}+246  s_2^2
    s_1^{16}-7  s_4  s_1^{16}-669  s_2  s_3  s_1^{15}+46
    s_5  s_1^{15}-1396  s_2^3  s_1^{14}+444  s_3^2
    s_1^{14}+116  s_2  s_4  s_1^{14}-125  s_6
    s_1^{14}+5631  s_2^2  s_3  s_1^{13}-127  s_3  s_4
    s_1^{13}-803  s_2  s_5  s_1^{13}+4737  s_2^4
    s_1^{12}-7383  s_2  s_3^2  s_1^{12}-66  s_4^2
    s_1^{12}-763  s_2^2  s_4  s_1^{12}+1151  s_3  s_5
    s_1^{12}+2100  s_2  s_6  s_1^{12}+3113  s_3^3
    s_1^{11}-25191  s_2^3  s_3  s_1^{11}+1600  s_2  s_3
    s_4  s_1^{11}+5593  s_2^2  s_5  s_1^{11}-129  s_4
    s_5  s_1^{11}-2900  s_3  s_6  s_1^{11}-9612  s_2^5
    s_1^{10}+48942  s_2^2  s_3^2  s_1^{10}+1002  s_2  s_4^2
    s_1^{10}+356  s_5^2  s_1^{10}+2490  s_2^3  s_4
    s_1^{10}-631  s_3^2  s_4  s_1^{10}-15917  s_2  s_3
    s_5  s_1^{10}-13960  s_2^2  s_6  s_1^{10}+1000  s_4
    s_6  s_1^{10}-40713  s_2  s_3^3  s_1^9-1779  s_3
    s_4^2  s_1^9+63180  s_2^4  s_3  s_1^9-7473  s_2^2
    s_3  s_4  s_1^9-19434  s_2^3  s_5  s_1^9+11172
    s_3^2  s_5  s_1^9+1218  s_2  s_4  s_5  s_1^9+37680
    s_2  s_3  s_6  s_1^9-3000  s_5  s_6  s_1^9+10800
    s_2^6  s_1^8+11988  s_3^4  s_1^8+477  s_4^3
    s_1^8-161703  s_2^3  s_3^2  s_1^8-5544  s_2^2  s_4^2
    s_1^8-3672  s_2  s_5^2  s_1^8-4032  s_2^4  s_4
    s_1^8+5238  s_2  s_3^2  s_4  s_1^8+82431  s_2^2
    s_3  s_5  s_1^8-621  s_3  s_4  s_5  s_1^8+45792
    s_2^3  s_6  s_1^8-25110  s_3^2  s_6  s_1^8-10800
    s_2  s_4  s_6  s_1^8+199098  s_2^2  s_3^3
    s_1^7+18603  s_2  s_3  s_4^2  s_1^7+4374  s_3
    s_5^2  s_1^7-84240  s_2^5  s_3  s_1^7+567  s_3^3
    s_4  s_1^7+15336  s_2^3  s_3  s_4  s_1^7+33696
    s_2^4  s_5  s_1^7-115263  s_2  s_3^2  s_5
    s_1^7-2943  s_4^2  s_5  s_1^7-3780  s_2^2  s_4
    s_5  s_1^7-180144  s_2^2  s_3  s_6  s_1^7+16200
    s_3  s_4  s_6  s_1^7+32400  s_2  s_5  s_6
    s_1^7-5184  s_2^7  s_1^6-115425  s_2  s_3^4  s_1^6-3402
    s_2  s_4^3  s_1^6+266328  s_2^4  s_3^2  s_1^6+13284
    s_2^3  s_4^2  s_1^6-15552  s_3^2  s_4^2  s_1^6+12636
    s_2^2  s_5^2  s_1^6+4779  s_4  s_5^2  s_1^6+2592
    s_2^5  s_4  s_1^6-13932  s_2^2  s_3^2  s_4
    s_1^6+52974  s_3^3  s_5  s_1^6-189540  s_2^3  s_3
    s_5  s_1^6+2754  s_2  s_3  s_4  s_5  s_1^6-73872
    s_2^4  s_6  s_1^6+231336  s_2  s_3^2  s_6
    s_1^6+38880  s_2^2  s_4  s_6  s_1^6-48600  s_3
    s_5  s_6  s_1^6+24057  s_3^5  s_1^5-431568  s_2^3
    s_3^3  s_1^5+5589  s_3  s_4^3  s_1^5-729  s_5^3
    s_1^5-63180  s_2^2  s_3  s_4^2  s_1^5-29889  s_2
    s_3  s_5^2  s_1^5+46656  s_2^6  s_3  s_1^5-7290
    s_2  s_3^3  s_4  s_1^5-11664  s_2^4  s_3  s_4
    s_1^5-23328  s_2^5  s_5  s_1^5+396576  s_2^2  s_3^2
    s_5  s_1^5+20898  s_2  s_4^2  s_5  s_1^5+3888
    s_2^3  s_4  s_5  s_1^5+5103  s_3^2  s_4  s_5
    s_1^5-96228  s_3^3  s_6  s_1^5+373248  s_2^3  s_3
    s_6  s_1^5-116640  s_2  s_3  s_4  s_6
    s_1^5-116640  s_2^2  s_5  s_6  s_1^5+369603  s_2^2
    s_3^4  s_1^4-729  s_4^4  s_1^4+5832  s_2^2  s_4^3
    s_1^4-174960  s_2^5  s_3^2  s_1^4-11664  s_2^4  s_4^2
    s_1^4+99873  s_2  s_3^2  s_4^2  s_1^4-14580  s_2^3
    s_5^2  s_1^4+15309  s_3^2  s_5^2  s_1^4-33534  s_2
    s_4  s_5^2  s_1^4+10935  s_3^4  s_4  s_1^4+11664
    s_2^3  s_3^2  s_4  s_1^4-365229  s_2  s_3^3  s_5
    s_1^4-35721  s_3  s_4^2  s_5  s_1^4+163296  s_2^4
    s_3  s_5  s_1^4-2916  s_2^2  s_3  s_4  s_5
    s_1^4+46656  s_2^5  s_6  s_1^4-682344  s_2^2  s_3^2
    s_6  s_1^4-46656  s_2^3  s_4  s_6  s_1^4+87480
    s_3^2  s_4  s_6  s_1^4+349920  s_2  s_3  s_5
    s_6  s_1^4-150903  s_2  s_3^5  s_1^3+349920  s_2^4
    s_3^3  s_1^3-
    17496  s_2  s_3  s_4^3  s_1^3+4374
    s_2  s_5^3  s_1^3-52488  s_3^3  s_4^2  s_1^3+69984
    s_2^3  s_3  s_4^2  s_1^3+52488  s_2^2  s_3  s_5^2
    s_1^3+63423  s_3  s_4  s_5^2  s_1^3+17496  s_2^2
    s_3^3  s_4  s_1^3+124659  s_3^4  s_5  s_1^3+8748
    s_4^3  s_5  s_1^3-454896  s_2^3  s_3^2  s_5
    s_1^3-34992  s_2^2  s_4^2  s_5  s_1^3-17496  s_2
    s_3^2  s_4  s_5  s_1^3+524880  s_2  s_3^3  s_6
    s_1^3-279936  s_2^4  s_3  s_6  s_1^3+        
    209952  s_2^2
    s_3  s_4  s_6  s_1^3+139968  s_2^3  s_5  s_6
    s_1^3-262440  s_3^2  s_5  s_6  s_1^3+19683  s_3^6
    s_1^2-393660  s_2^3  s_3^4  s_1^2+13122  s_3^2  s_4^3
    s_1^2-19683  s_3  s_5^3  s_1^2-157464  s_2^2  s_3^2
    s_4^2  s_1^2-59049  s_2  s_3^2  s_5^2  s_1^2-39366
    s_4^2  s_5^2  s_1^2+52488  s_2^2  s_4  s_5^2
    s_1^2-39366  s_2  s_3^4  s_4  s_1^2+629856  s_2^2
    s_3^3  s_5  s_1^2+104976  s_2  s_3  s_4^2  s_5
    s_1^2+19683  s_3^3  s_4  s_5  s_1^2-137781  s_3^4
    s_6  s_1^2+629856  s_2^3  s_3^2  s_6  s_1^2-314928
    s_2  s_3^2  s_4  s_6  s_1^2-629856  s_2^2  s_3
    s_5  s_6  s_1^2+236196  s_2^2  s_3^5  s_1+78732
    s_4  s_5^3  s_1+157464  s_2  s_3^3  s_4^2
    s_1+19683  s_3^3  s_5^2  s_1-157464  s_2  s_3  s_4
    s_5^2  s_1+19683  s_3^5  s_4  s_1-433026  s_2
    s_3^4  s_5  s_1-78732  s_3^2  s_4^2  s_5
    s_1-629856  s_2^2  s_3^3  s_6  s_1+157464  s_3^3
    s_4  s_6  s_1+944784  s_2  s_3^2  s_5  s_6
    s_1-59049  s_2  s_3^6-59049  s_5^4-59049  s_3^4
    s_4^2+118098  s_3^2  s_4  s_5^2+118098  s_3^5
    s_5+236196  s_2  s_3^4  s_6-472392  s_3^3  s_5
    s_6.
}
\]

\medskip
\noindent{\bf Case 3.} $G=D_4=\langle a,b\mid a^4=b^2=1,\ b^{-1}ab=a^{-1}\rangle$.

\begin{table}[h]
\caption{Character table of $D_4$}\label{Tb3}
\vspace{-2em}
\[
\renewcommand*{\arraystretch}{1.2}
\begin{tabular}{c|ccccc}
\hline
conj. cls. rep. & $1$ & $a^2$ & $a$ & $b$ & $ab$  \\ 
conj. cls. size & 1 & 1& 2& 2& 2\\ \hline
$\chi_1$ & $1$ & $1$ & $1$ & $1$ & $1$ \\
$\chi_2$ & $1$ & $1$ & $1$ & $-1$ & $-1$  \\
$\chi_3$ & $1$ & $1$ & $-1$ & $1$ & $-1$  \\
$\chi_4$ & $1$ & $1$ & $-1$ & $-1$ & $1$  \\
$\chi_5$ & $2$ & $-2$ & $0$ & $0$ & $0$  \\ \hline
\end{tabular}
\]
\end{table}
We have
\[
F_2=\sum_{g\in\langle a\rangle}X_g-\sum_{g\in\langle a\rangle b}X_g,
\]
\[
F_3=\sum_{g\in\langle a^2,b\rangle}X_g-\sum_{g\in a\langle a^2,b\rangle}X_g,
\]
\[
F_4=\sum_{g\in\langle a^2,ab\rangle}X_g-\sum_{g\in a\langle a^2,ab\rangle}X_g,
\]
\[
F_5=\sum_{g\in\langle a\rangle}X_g^2-\sum_{g\in\langle a\rangle b}X_g^2-2X_1X_{a^2}-2X_aX_{a^3}+2X_bX_{a^2b}+2X_{ab}X_{a^3b}.
\]
$F_2,F_3,F_3$ are in the same $S_8$-orbit and hence have the same symmetrization. Rename the elements of $G$ as
\[
(1,a,a^2,a^3,b,ab,a^2b,a^3b)\to (0,2,4,6,1,3,5,7).
\]
Then 
\[
F_2=X_0+X_2+X_4+X_6-(X_1+X_3+X_5+X_7),
\]
\[
F_5=(X_0-X_4)^2+(X_2-X_6)^2-(X_1-X_5)^2-(X_3-X_7)^2.
\]
By Lemma~\ref{L5.2},
\[
E_2=\prod_{\phi\in\mathcal E_{8,2}}\phi(F_2).
\]
Let $B_i=(X_i-X_{i+4})^2$, $0\le i\le 3$. Then $F_5=B_0+B_2-B_1-B_3$. Let 
\[
F_5'=(B_0+B_2-B_1-B_3)(B_1+B_2-B_0-B_3)(B_3+B_2-B_1-B_0),
\]
which is symmetric in $B_0,B_1,B_2,B_3$. By Lemma~\ref{L5.2},
\[
E_5:=\prod_{\phi\in\mathcal E_{8,4}}\phi(F_5')
\]
is a symmetrization of $F_5$.

$E_2$ is a product of 35 homogeneous linear polynomials in 8 variables and $E_5$ is a product of 315 homogeneous quadratic polynomials in 8 variables. The computation of $H_2$ (the symmetric reduction of $E_2$) is impractical and the computation of $H_5$ (the symmetric reduction of $E_5$) is impossible.

\medskip
\noindent{\bf Case 4.} $G=Q_8=\langle a,b\mid a^4=1,\ b^2=a^2,\ b^{-1}ab=a^{-1}\rangle$.

\begin{table}[h]
\caption{Character table of $Q_8$}\label{Tb4}
\vspace{-2em}
\[
\renewcommand*{\arraystretch}{1.2}
\begin{tabular}{c|ccccc}
\hline
conj. cls. rep. & $1$ & $a^2$ & $a$ & $b$ & $ab$  \\ 
conj. cls. size & 1 & 1& 2& 2& 2\\ \hline
$\chi_1$ & $1$ & $1$ & $1$ & $1$ & $1$ \\
$\chi_2$ & $1$ & $1$ & $1$ & $-1$ & $-1$  \\
$\chi_3$ & $1$ & $1$ & $-1$ & $1$ & $-1$  \\
$\chi_4$ & $1$ & $1$ & $-1$ & $-1$ & $1$  \\
$\chi_5$ & $2$ & $-2$ & $0$ & $0$ & $0$  \\ \hline
\end{tabular}
\]
\end{table}
We have
\[
F_2=\sum_{g\in\langle a\rangle}X_g-\sum_{g\in\langle a\rangle b}X_g,
\]
\[
F_3=\sum_{g\in\langle a^2,b\rangle}X_g-\sum_{g\in a\langle a^2,b\rangle}X_g,
\]
\[
F_4=\sum_{g\in\langle a^2,ab\rangle}X_g-\sum_{g\in a\langle a^2,ab\rangle}X_g,
\]
\[
F_5=(X_1-X_{a^2})^2+(X_a-X_{a^3})^2+(X_b-X_{a^2b})^2+(X_{ab}-X_{a^3b})^2.
\]
As in Case 3, rename the elements of $G$ as
\[
(1,a,a^2,a^3,b,ab,a^2b,a^3b)\to (0,2,4,6,1,3,5,7).
\]
$F_2,F_3,F_4$ are in the same $S_8$-orbit and $E_2$ is the same as in Case 3. We have
\[
F_5=(X_0-X_4)^2+(X_1-X_5)^2+(X_2-X_6)^2+(X_3-X_7)^2.
\]
By Lemma~\ref{L5.2} (with $B_i=(X_i-X_{i+4})^2$, $0\le i\le 3$),
\[
E_5:=\prod_{\phi\in\mathcal E_{8,4}}\phi(F_5)
\]
is a symmetrization of $F_5$. This is a product of 105 homogeneous quadratic polynomials in 8 variable. We acre unable to compute the symmetric reductions $H_2$ and $H_5$ of $E_2$ and $E_5$.

\medskip
\noindent{\bf Case 5.} $G=(\Bbb Z/2\Bbb Z)^3$.

\begin{table}[h]
\caption{Character table of $(\Bbb Z/2\Bbb Z)^3$}\label{Tb5}
\vspace{-2em}
\[
\renewcommand*{\arraystretch}{1.2}
\begin{tabular}{c|cccccccc}
\hline
conj. cls. rep. & $(000)$ & $(001)$ & $(010)$ & $(011)$ & $(100)$ & $(101)$ & $(110)$ & $(111)$ \\ 
conj. cls. size & 1 & 1&  1 & 1 & 1 & 1 & 1 & 1\\ \hline
$\chi_1$ & $1$ & $1$ & $1$ & $1$ & $1$ & $1$ & $1$ & $1$ \\
$\chi_2$ & $1$ & $-1$ & $1$ & $-1$ & $1$ & $-1$ & $1$ & $-1$ \\
$\chi_3$ & $1$ & $1$ & $-1$ & $-1$ & $1$ & $1$ & $-1$ & $-1$ \\ 
$\chi_4$ & $1$ & $-1$ & $-1$ & $1$ & $1$ & $-1$ & $-1$ & $1$ \\ 
$\chi_5$ & $1$ & $1$ & $1$ & $1$ & $-1$ & $-1$ & $-1$ & $-1$ \\
$\chi_6$ & $1$ & $-1$ & $1$ & $-1$ & $-1$ & $1$ & $-1$ & $1$ \\
$\chi_7$ & $1$ & $1$ & $-1$ & $-1$ & $-1$ & $-1$ & $1$ & $1$ \\ 
$\chi_8$ & $1$ & $-1$ & $-1$ & $1$ & $-1$ & $1$ & $1$ & $-1$ \\ 
\hline
\end{tabular}
\]
\end{table}
In this case, $F_2,\dots,F_8$ are in the same $S_8$-orbit. $F_2$ and $E_2$ are the same as those in Case~3.

\medskip
\noindent{\bf Case 6.} $G=(\Bbb Z/2\Bbb Z)\times(\Bbb Z/4\Bbb Z)$.

\begin{table}[h]
\caption{Character table of $(\Bbb Z/2\Bbb Z)\times(\Bbb Z/4\Bbb Z)$}\label{Tb6}
\vspace{-2em}
\[
\renewcommand*{\arraystretch}{1.2}
\begin{tabular}{c|cccccccc}
\hline
conj. cls. rep. & $(00)$ & $(01)$ & $(02)$ & $(03)$ & $(10)$ & $(11)$ & $(12)$ & $(13)$ \\ 
conj. cls. size & 1 & 1&  1 & 1 & 1 & 1 & 1 & 1\\ \hline
$\chi_1$ & $1$ & $1$ & $1$ & $1$ & $1$ & $1$ & $1$ & $1$ \\
$\chi_2$ & $1$ & $-1$ & $1$ & $-1$ & $1$ & $-1$ & $1$ & $-1$ \\
$\chi_3$ & $1$ & $i$ & $-1$ & $-i$ & $1$ & $i$ & $-1$ & $-i$ \\ 
$\chi_4$ & $1$ & $-i$ & $-1$ & $i$ & $1$ & $-i$ & $-1$ & $i$ \\ 
$\chi_5$ & $1$ & $1$ & $1$ & $1$ & $-1$ & $-1$ & $-1$ & $-1$ \\
$\chi_6$ & $1$ & $-1$ & $1$ & $-1$ & $-1$ & $1$ & $-1$ & $1$ \\
$\chi_7$ & $1$ & $i$ & $-1$ & $-i$ & $-1$ & $-i$ & $1$ & $i$ \\ 
$\chi_8$ & $1$ & $-i$ & $-1$ & $i$ & $-1$ & $i$ & $1$ & $-i$ \\ 
\hline
\end{tabular}
\]
\end{table}
In this case, $F_2,F_5,F_6$ are in the same $S_8$-orbit, $F_3,F_4,F_7,F_8$ are in the same $S_8$-orbit, and $F_2$ and $E_2$ are the same as those in Case~3.  Rename the element $(a,b)\in G$ as $4a+b$ and let $B_i=X_i+X_{i+4}$, $0\le i\le 3$. Then 
\[
F_3F_4=(B_0-B_2)^2+(B_1-B_3)^2.
\]
Let 
\[
F_3'=\bigl((B_0-B_2)^2+(B_1-B_3)^2\bigr)\bigl((B_1-B_2)^2+(B_0-B_3)^2\bigr)\bigl((B_3-B_2)^2+(B_1-B_0)^2\bigr),
\]
which is symmetric in $B_0,B_1,B_2,B_3$. By Lemma~\ref{L5.2},
\[
E_5:=\prod_{\phi\in\mathcal E_{8,4}}\phi(F_3')
\]
is a symmetrization of $F_3$. Its symmetric reduction is beyond reach.

\section*{Appendix}

\noindent A1. The expression of $\Theta_{7,7}$.

\medskip

Recall from \eqref{Psi} that 
\[
\Psi_7(X_0,\dots,X_6)=\prod_{i\in(\Bbb Z/7\Bbb Z)^\times}\Bigl(\sum_{j\in\Bbb Z/7\Bbb Z}\epsilon_7^{ij}X_j\Bigr)\in\Bbb Z[X_0,\dots,X_6].
\]
The expansion of $\Psi_7$ is lengthy and is not included here. However, it can be easily generated by the following Mathematica code:

\medskip

\begin{mmaCell}{Code}
Array [x, 7, 0];
psi7 = 1;
For[i = 1, i < 7, i++,
  psi7 = psi7*Sum[e^(i*j) x[j], {j, 0, 6}];
  ];
psi7 = PolynomialMod[psi7, Cyclotomic[7, e]];
\end{mmaCell}

\medskip

\noindent
$\Theta_{7,7}$ is the symmetrization of $\Psi_7$. By equation~(2.8) of \cite{Hou-preprint},
\[
\Theta_{7,7}=\prod_{(i_2,\dots,i_6)}\Psi_7(X_0,X_1,X_{i_2},X_{i_3},X_{i_4},X_{i_5},X_{i_6}),
\]
where $(i_2,\dots,i_6)$ runs through all permutations of $2,\dots,6$.

\medskip
\noindent A2. $\theta_{6,6}(1,0,\dots,0,a)$ in characteristic $p$, $5\le p\le 97$.
{\small
\[
\renewcommand*{\arraystretch}{1.4}
\begin{tabular}{c|l}
\hline
$p$ & \hfil$\theta_{6,6}(1,0,\dots,0,a)$\hfil \\ \hline
5 & $3(a^2+4a+2)^5$ \\
7 & $3 (a^{14}+6 a^{13}+2 a^{12}+5 a^{11}+6 a^{10}+5 a^8+6 a^7+a^5+6 a^4+2 a^2+5
   a+5) $\\
11 & $4 (a^{19}+3 a^{18}+10 a^{17}+5 a^{16}+7 a^{15}+4 a^{14}+4 a^{13}+6 a^{12}+8
   a^{11}+10 a^{10} $\\
   & $+4 a^9+10 a^8+7 a^7+5 a^6+5 a^5+8 a^4+2 a^3+a^2+2 a+3) $\\
13& $7 (a+12)  (a^{16}+4 a^{15}+7 a^{14}+5 a^{13}+2 a^{12}+5 a^{11}+9 a^{10}+5 a^9$\\
&$+8 a^8+12 a^7+a^6+11 a^5+8 a^4+10 a^3+3 a^2+10 a+11  )$\\

17& $10  (a^3+4 a^2+13 a+8  )  (a^3+7 a^2+11 a+8  )  (a^3+14 a^2+4 a+1  )$\\
& $(a^{10}+14 a^9+2 a^8+6 a^7+6 a^6+2 a^5+2 a^4+5 a^3+7 a^2+8 a+14  )$\\

19& $4 (a+15)  (a^7+17 a^6+6 a^5+10 a^4+8 a^3+9 a^2+7 a+17  )$\\
&$  (a^{11}+12 a^{10}+18 a^9+4 a^8+3 a^7+17 a^6+17 a^5+12 a^4+a^3+18 a^2+3  )$\\

23& $11  (a^2+12 a+3  )  (a^5+4 a^4+19 a^3+18 a^2+15 a+14  )(a^{12}+12 a^{11}+18 a^{10}+10 a^9$\\
&$+10 a^8+18 a^7+5 a^6+2 a^5+3 a^4+22 a^3+4 a^2+19 a+12  )$\\

29& $28  (a^{19}+13 a^{18}+10 a^{17}+15 a^{16}+20 a^{15}+18 a^{14}+13 a^{13}+a^{12}+23 a^{11}+22 a^{10}$\\
&$+10 a^9+20 a^8+2 a^7+11 a^6+21 a^5+24 a^4+25 a^3+7 a^2+19 a+28  )$\\

31& $(a+19)  (a^5+5 a^4+18 a^3+13 a^2+4 a+16  )  (a^5+18 a^4+12 a^3+7 a^2+9 a+9  ) $\\ &$(a^8+27 a^7+6 a^6+27 a^5+25 a^4+20 a^3+9 a^2+21 a+4  )$\\

37& $5  (a^2+7 a+9  )  (a^4+2 a^3+35 a^2+1  )  (a^{13}+4 a^{12}+31 a^{11}+4 a^{10}+26 a^9+26 a^8$\\
&$+5 a^7+5 a^6+3 a^5+a^4+16 a^3+33 a^2+36 a+14  )$\\

41& $32  (a^3+33 a^2+9 a+31  )  (a^6+2 a^5+27 a^4+24 a^3+9 a^2+8 a+19  )$\\
&$  (a^{10}+8 a^9+17 a^8+28 a^7+33 a^6+23 a^5+6 a^4+37 a^3+26 a^2+39 a+17  )$\\

43& $42  (a^2+a+21  )  (a^2+17 a+35  )  (a^2+37 a+18  )  (a^3+7 a^2+21 a+35  )$\\
&$  (a^4+39 a^3+5 a^2+24 a+30  )  (a^6+4 a^5+28 a^4+9 a^3+34 a^2+9 a+7  )$\\

47& $15  (a^3+19 a^2+13 a+8  )  (a^{16}+41 a^{15}+8 a^{14}+9 a^{13}+21 a^{12}+12 a^{11}+10 a^{10}$\\
&$+17 a^9+45 a^8+42 a^7+43 a^5+16 a^4+32 a^3+36 a^2+3 a+38  )$\\

53& $50 (a+6)^2 (a+25)^2  (a^2+46 a+20  )  (a^4+4 a^3+32 a^2+6 a+40  )$\\
&$  (a^4+37 a^3+9 a^2+7 a+43  )  (a^5+19 a^4+3 a^3+12 a^2+20 a+49  )$\\

59& $20  (a^{19}+47 a^{18}+45 a^{17}+51 a^{16}+53 a^{15}+25 a^{14}+51 a^{13}+31 a^{12}+58 a^{11}+18 a^{10}$\\
&$+52 a^9+42 a^8+15 a^7+10 a^6+22 a^5+16 a^4+38 a^3+3 a^2+49 a+3  )$\\

61& $47 (a+33)^2  (a^3+36 a^2+29 a+35  )  (a^3+57 a^2+43 a+24  )$\\
&$  (a^4+27 a^3+3 a^2+12 a+53  )  (a^7+9 a^6+9 a^5+9 a^4+33 a^3+24 a^2+54 a+32  )$\\

67& $32 (a+36)  (a^4+2 a^3+30 a^2+12 a+5  )  (a^5+37 a^4+31 a^3+48 a^2+20 a+60  )$\\  
&$(a^9+31 a^8+13 a^7+57 a^6+63 a^5+35 a^4+50 a^3+39 a^2+6 a+24  )$\\

71& $57  (a^3+17 a^2+70 a+48  )  (a^{16}+67 a^{15}+26 a^{14}+12 a^{13}+5 a^{12}+47 a^{11}+60 a^{10}$\\
&$+69 a^9+29 a^8+14 a^7+13 a^6+25 a^5+65 a^4+15 a^3+67 a^2+49 a+43  )$\\
\hline
\end{tabular}
\]
}
{\small
\[
\renewcommand*{\arraystretch}{1.4}
\begin{tabular}{c|l}
\hline
$p$ & \hfil$\theta_{6,6}(1,0,\dots,0,a)$\hfil \\ \hline

73& $62  (a^3+63 a^2+39 a+6  )  (a^5+6 a^4+14 a^3+57 a^2+62 a+58  ) $\\
&$ (a^{11}+12 a^{10}+40 a^9+32 a^8+39 a^7+69 a^6+68 a^5+13 a^4+38 a^3+6 a^2+59 a+57  )$\\

79& $40 (a+29) (a+45)  (a^2+26 a+27  )  (a^2+61 a+75  )  (a^{13}+71 a^{12}+44 a^{11}+41 a^{10}$\\
&$+47 a^9+2 a^8+2 a^7+53 a^6+48 a^5+52 a^4+70 a^3+46 a^2+64 a+39  )$\\

83& $82  (a^{19}+33 a^{18}+37 a^{17}+12 a^{16}+12 a^{15}+75 a^{14}+3 a^{13}+a^{12}+24 a^{11}+65 a^{10}$\\
&$+45 a^9+57 a^8+35 a^7+14 a^6+49 a^5+27 a^4+8 a^3+28 a^2+23 a+82  )$\\

89& $55 (a+22)^2 (a+40) (a+78)  (a^{15}+43 a^{14}+51 a^{13}+6 a^{12}+29 a^{11}+82 a^{10}+14 a^9$\\
&$+13 a^8+53 a^7+43 a^6+22 a^5+21 a^4+2 a^3+33 a^2+79 a+2  )$\\

97& $10  (a^3+37 a^2+a+7  )  (a^5+26 a^4+48 a^3+53 a^2+36 a+32  )$\\
&$  (a^5+73 a^4+78 a^3+85 a^2+33 a+30  )  (a^6+45 a^5+15 a^4+89 a^3+61 a^2+16 a+60  )$\\      
\hline
\end{tabular}
\]
}

\medskip
\noindent A3. Values of $\text{Irr}\,(q,n)$, $\text{Norm}\,(q,n)$, $\text{Suff}\,(q,n)$,  $n=6,7$, $q\le 49$, $\text{gcd}(q,n)=1$.

\medskip

Refer to \eqref{Irr}, \eqref{Norm}, \eqref{Suff} for the definitions of $\text{Irr}\,(q,n)$, $\text{Norm}\,(q,n)$, $\text{Suff}\,(q,n)$.
\[
\renewcommand*{\arraystretch}{1.4}
\begin{tabular}{c|c|c|c}
\multicolumn{4}{c}{$n=6$}\\
\hline
$q$ & $\text{Irr}\,(q,6)$ &$\text{Norm}\,(q,6)$ & $\text{Suff}\,(q,6)$\\ \hline
5 & 1 & 1& 1\\
7 & 0 & 0& 0\\
11 & 2 & 2& 2\\
13 & 2 & 1& 1\\
17 & 4 & 3& 3\\
19 & 3 & 2& 2\\
23 & 2 & 2& 2\\
25 & 4 & 4& 4\\
29 & 7 & 7& 7\\
31 & 4 & 3& 3\\
37 & 6 & 6& 6\\
41 & 6 & 6& 6\\
43 & 5 & 5& 5\\
47 & 10 & 10& 10\\
49 & 10 & 10& 10\\
\hline
\end{tabular}
\]

\[
\renewcommand*{\arraystretch}{1.4}
\begin{tabular}{c|c|c|c}
\multicolumn{4}{c}{$n=7$}\\
\hline
$q$ & $\text{Irr}\,(q,7)$ &$\text{Norm}\,(q,7)$ & $\text{Suff}\,(q,7)$\\ \hline
2 & 1 & 1& 1\\
3 & 0 & 0& 0\\
4 & 1 & 1& 1\\
5 & 2 & 2& 2\\
8 & 1 & 1& 1\\
9 & 2 & 2& 2\\
11 & 3 & 3& 3\\
13 & 1 & 1& 1\\
16 & 5 & 5& 5\\
17 & 2 & 2& 2\\
19 & 2 & 2& 2\\
23 & 1 & 1& 1\\
25 & 2 & 2& 2\\
27 & 0 & 0& 0\\
29 & 6 & 5& 5\\
31 & 4 & 4& 4\\
32 & 11 & 11& 11\\
37 & 5 & 5& 5\\
41 & 4 & 4& 4\\
43 & 4 & 4& 4\\
47 & 7 & 7& 7\\
\hline
\end{tabular}
\]

\medskip
\noindent A4. Intermediate data in the computation of $\theta_{7,7}(1,0,\dots,0,a)$ with $p=2$.

\medskip
\noindent Minimal polynomials of $u_i$:

\[
\renewcommand*{\arraystretch}{1.4}
\begin{tabular}{c|c|c}
\hline
$i$ &$k_i$ & $g_i(X)$\\ \hline
$1$& $7$& $X^7+X^6+1$\\
$2$& $7$& $X^7+X^4+1$\\
$3$& $8$& $X^8+X^7+X^5+X^4+1$\\
$4$& $8$& $X^8+X^6+X^5+X^4+1$\\
$5$& $7$& $X^7+X^6+X^5+X^4+1$\\
$6$& $7$& $X^7+X^3+1$\\
$7$& $4$& $X^4+X^3+1$\\
$8$& $8$& $X^8+X^6+X^5+X^2+1$\\
$9$& $6$& $X^6+X^5+X^4+X^2+1$\\
$10$& $8$& $X^8+X^7+X^3+X^2+1$\\
$11$& $6$& $X^6+X+1$\\
$12$& $8$& $X^8+X^7+X^6+X^5+X^4+X+1$\\
$13$& $8$& $X^8+X^5+X^3+X+1$\\
$14$& $6$& $X^6+X^4+X^3+X+1$\\
$15$& $8$& $X^8+X^7+X^6+X^4+X^2+X+1$\\
\hline
\end{tabular}
\]

\vskip2em
\medskip
\noindent The matrices $[A(u_i)\ C(u_i)]$:

\vskip1em
\medskip
\noindent $i=1$.

{\tiny
\begin{align*}
1  0  0  0  0  0  0  1  1  1  1  1  1  1  0  1  0  1  0  1  0  0  1  1  0  0  1  1  1  0  1  1  1  0  1  0  0  1  0  1  1  0  0  0  1  1  0  1  1  1  1  0  1  1  0  1  0  1  1  0  1  1  0  0  1  0  0  1  0  0  0  1  1  1  0  0  0  0  1  0  1  1  1  1  1  0  0  1  0  1  0  1  1  1  0  0  1  1  0  1  0  0  0  &\  0 \cr
 0  1  0  0  0  0  0  0  1  1  1  1  1  1  1  0  1  0  1  0  1  0  0  1  1  0  0  1  1  1  0  1  1  1  0  1  0  0  1  0  1  1  0  0  0  1  1  0  1  1  1  1  0  1  1  0  1  0  1  1  0  1  1  0  0  1  0  0  1  0  0  0  1  1  1  0  0  0  0  1  0  1  1  1  1  1  0  0  1  0  1  0  1  1  1  0  0  1  1  0  1  0  0  &\  1 \cr
 0  0  1  0  0  0  0  0  0  1  1  1  1  1  1  1  0  1  0  1  0  1  0  0  1  1  0  0  1  1  1  0  1  1  1  0  1  0  0  1  0  1  1  0  0  0  1  1  0  1  1  1  1  0  1  1  0  1  0  1  1  0  1  1  0  0  1  0  0  1  0  0  0  1  1  1  0  0  0  0  1  0  1  1  1  1  1  0  0  1  0  1  0  1  1  1  0  0  1  1  0  1  0  &\  1 \cr
 0  0  0  1  0  0  0  0  0  0  1  1  1  1  1  1  1  0  1  0  1  0  1  0  0  1  1  0  0  1  1  1  0  1  1  1  0  1  0  0  1  0  1  1  0  0  0  1  1  0  1  1  1  1  0  1  1  0  1  0  1  1  0  1  1  0  0  1  0  0  1  0  0  0  1  1  1  0  0  0  0  1  0  1  1  1  1  1  0  0  1  0  1  0  1  1  1  0  0  1  1  0  1  &\  0 \cr
 0  0  0  0  1  0  0  0  0  0  0  1  1  1  1  1  1  1  0  1  0  1  0  1  0  0  1  1  0  0  1  1  1  0  1  1  1  0  1  0  0  1  0  1  1  0  0  0  1  1  0  1  1  1  1  0  1  1  0  1  0  1  1  0  1  1  0  0  1  0  0  1  0  0  0  1  1  1  0  0  0  0  1  0  1  1  1  1  1  0  0  1  0  1  0  1  1  1  0  0  1  1  0  &\  0 \cr
 0  0  0  0  0  1  0  0  0  0  0  0  1  1  1  1  1  1  1  0  1  0  1  0  1  0  0  1  1  0  0  1  1  1  0  1  1  1  0  1  0  0  1  0  1  1  0  0  0  1  1  0  1  1  1  1  0  1  1  0  1  0  1  1  0  1  1  0  0  1  0  0  1  0  0  0  1  1  1  0  0  0  0  1  0  1  1  1  1  1  0  0  1  0  1  0  1  1  1  0  0  1  1  &\  0 \cr
 0  0  0  0  0  0  1  1  1  1  1  1  1  0  1  0  1  0  1  0  0  1  1  0  0  1  1  1  0  1  1  1  0  1  0  0  1  0  1  1  0  0  0  1  1  0  1  1  1  1  0  1  1  0  1  0  1  1  0  1  1  0  0  1  0  0  1  0  0  0  1  1  1  0  0  0  0  1  0  1  1  1  1  1  0  0  1  0  1  0  1  1  1  0  0  1  1  0  1  0  0  0  1  &\  1 \cr
\end{align*}
} 

\medskip
\noindent $i=2$.

{\tiny
\begin{align*}
 1  0  0  0  0  0  0  1  0  0  1  0  0  1  1  0  1  0  0  1  1  1  1  0  1  1  1  0  0  0  0  1  1  1  1  1  1  1  0  0  0  1  1  1  0  1  1  0  0  0  1  0  1  0  0  1  0  1  1  1  1  1  0  1  0  1  0  1  0  0  0  0  1  0  1  1  0  1  1  1  1  0  0  1  1  1  0  0  1  0  1  0  1  1  0  0  1  1  0  0  0  0  0  &\  1 \cr
 0  1  0  0  0  0  0  0  1  0  0  1  0  0  1  1  0  1  0  0  1  1  1  1  0  1  1  1  0  0  0  0  1  1  1  1  1  1  1  0  0  0  1  1  1  0  1  1  0  0  0  1  0  1  0  0  1  0  1  1  1  1  1  0  1  0  1  0  1  0  0  0  0  1  0  1  1  0  1  1  1  1  0  0  1  1  1  0  0  1  0  1  0  1  1  0  0  1  1  0  0  0  0  &\  0 \cr
 0  0  1  0  0  0  0  0  0  1  0  0  1  0  0  1  1  0  1  0  0  1  1  1  1  0  1  1  1  0  0  0  0  1  1  1  1  1  1  1  0  0  0  1  1  1  0  1  1  0  0  0  1  0  1  0  0  1  0  1  1  1  1  1  0  1  0  1  0  1  0  0  0  0  1  0  1  1  0  1  1  1  1  0  0  1  1  1  0  0  1  0  1  0  1  1  0  0  1  1  0  0  0  &\  1 \cr
 0  0  0  1  0  0  0  0  0  0  1  0  0  1  0  0  1  1  0  1  0  0  1  1  1  1  0  1  1  1  0  0  0  0  1  1  1  1  1  1  1  0  0  0  1  1  1  0  1  1  0  0  0  1  0  1  0  0  1  0  1  1  1  1  1  0  1  0  1  0  1  0  0  0  0  1  0  1  1  0  1  1  1  1  0  0  1  1  1  0  0  1  0  1  0  1  1  0  0  1  1  0  0  &\  1 \cr
 0  0  0  0  1  0  0  1  0  0  1  1  0  1  0  0  1  1  1  1  0  1  1  1  0  0  0  0  1  1  1  1  1  1  1  0  0  0  1  1  1  0  1  1  0  0  0  1  0  1  0  0  1  0  1  1  1  1  1  0  1  0  1  0  1  0  0  0  0  1  0  1  1  0  1  1  1  1  0  0  1  1  1  0  0  1  0  1  0  1  1  0  0  1  1  0  0  0  0  0  1  1  0  &\  1 \cr
 0  0  0  0  0  1  0  0  1  0  0  1  1  0  1  0  0  1  1  1  1  0  1  1  1  0  0  0  0  1  1  1  1  1  1  1  0  0  0  1  1  1  0  1  1  0  0  0  1  0  1  0  0  1  0  1  1  1  1  1  0  1  0  1  0  1  0  0  0  0  1  0  1  1  0  1  1  1  1  0  0  1  1  1  0  0  1  0  1  0  1  1  0  0  1  1  0  0  0  0  0  1  1  &\  1 \cr
 0  0  0  0  0  0  1  0  0  1  0  0  1  1  0  1  0  0  1  1  1  1  0  1  1  1  0  0  0  0  1  1  1  1  1  1  1  0  0  0  1  1  1  0  1  1  0  0  0  1  0  1  0  0  1  0  1  1  1  1  1  0  1  0  1  0  1  0  0  0  0  1  0  1  1  0  1  1  1  1  0  0  1  1  1  0  0  1  0  1  0  1  1  0  0  1  1  0  0  0  0  0  1  &\  1 \cr
\end{align*}
}

\medskip
\noindent $i=3$.

{\tiny
\begin{align*}
 1   0   0   0   0   0   0   0   1   1   1   0   0   0   1   1   0   0   1   0   0   1   1   0   1   1   1   0   0   1   1   1   1   0   1   1   0   0   1   1   0   1   0   0   1   0   1   1   0   0   0   1   0   0   0   0   0   0   0   1   1   1   0   0   0   1   1   0   0   1   0   0   1   1   0   1   1   1   0   0   1   1   1   1   0   1   1   0   0   1   1   0   1   0   0   1   0   1   1   0   0   0   1    &\  1  \cr
 0   1   0   0   0   0   0   0   0   1   1   1   0   0   0   1   1   0   0   1   0   0   1   1   0   1   1   1   0   0   1   1   1   1   0   1   1   0   0   1   1   0   1   0   0   1   0   1   1   0   0   0   1   0   0   0   0   0   0   0   1   1   1   0   0   0   1   1   0   0   1   0   0   1   1   0   1   1   1   0   0   1   1   1   1   0   1   1   0   0   1   1   0   1   0   0   1   0   1   1   0   0   0    &\  0  \cr
 0   0   1   0   0   0   0   0   0   0   1   1   1   0   0   0   1   1   0   0   1   0   0   1   1   0   1   1   1   0   0   1   1   1   1   0   1   1   0   0   1   1   0   1   0   0   1   0   1   1   0   0   0   1   0   0   0   0   0   0   0   1   1   1   0   0   0   1   1   0   0   1   0   0   1   1   0   1   1   1   0   0   1   1   1   1   0   1   1   0   0   1   1   0   1   0   0   1   0   1   1   0   0    &\  0  \cr
 0   0   0   1   0   0   0   0   0   0   0   1   1   1   0   0   0   1   1   0   0   1   0   0   1   1   0   1   1   1   0   0   1   1   1   1   0   1   1   0   0   1   1   0   1   0   0   1   0   1   1   0   0   0   1   0   0   0   0   0   0   0   1   1   1   0   0   0   1   1   0   0   1   0   0   1   1   0   1   1   1   0   0   1   1   1   1   0   1   1   0   0   1   1   0   1   0   0   1   0   1   1   0    &\  0  \cr
 0   0   0   0   1   0   0   0   1   1   1   0   1   1   0   1   0   0   0   1   0   1   0   0   1   0   0   0   1   0   0   1   1   1   0   0   1   0   0   0   0   1   1   1   1   1   1   1   1   0   1   0   0   0   0   1   0   0   0   1   1   1   0   1   1   0   1   0   0   0   1   0   1   0   0   1   0   0   0   1   0   0   1   1   1   0   0   1   0   0   0   0   1   1   1   1   1   1   1   1   0   1   0    &\  1  \cr
 0   0   0   0   0   1   0   0   1   0   0   1   0   1   0   1   1   0   1   0   1   1   0   0   1   0   1   0   0   0   1   1   0   1   0   1   0   1   1   1   0   1   1   1   0   1   0   0   1   1   0   0   0   0   0   0   1   0   0   1   0   0   1   0   1   0   1   1   0   1   0   1   1   0   0   1   0   1   0   0   0   1   1   0   1   0   1   0   1   1   1   0   1   1   1   0   1   0   0   1   1   0   0    &\  0  \cr
 0   0   0   0   0   0   1   0   0   1   0   0   1   0   1   0   1   1   0   1   0   1   1   0   0   1   0   1   0   0   0   1   1   0   1   0   1   0   1   1   1   0   1   1   1   0   1   0   0   1   1   0   0   0   0   0   0   1   0   0   1   0   0   1   0   1   0   1   1   0   1   0   1   1   0   0   1   0   1   0   0   0   1   1   0   1   0   1   0   1   1   1   0   1   1   1   0   1   0   0   1   1   0    &\  1  \cr
 0   0   0   0   0   0   0   1   1   1   0   0   0   1   1   0   0   1   0   0   1   1   0   1   1   1   0   0   1   1   1   1   0   1   1   0   0   1   1   0   1   0   0   1   0   1   1   0   0   0   1   0   0   0   0   0   0   0   1   1   1   0   0   0   1   1   0   0   1   0   0   1   1   0   1   1   1   0   0   1   1   1   1   0   1   1   0   0   1   1   0   1   0   0   1   0   1   1   0   0   0   1   0    &\  1  \cr
\end{align*}
} 

\medskip
\noindent $i=4$.

{\tiny
\begin{align*}
 1   0   0   0   0   0   0   0   1   0   1   1   0   0   0   1   1   1   1   0   1   0   0   0   0   1   1   1   1   1   1   1   1   0   0   1   0   0   0   0   1   0   1   0   0   1   1   1   1   1   0   1   0   1   0   1   0   1   1   1   0   0   0   0   0   1   1   0   0   0   1   0   1   0   1   1   0   0   1   1   0   0   1   0   1   1   1   1   1   1   0   1   1   1   1   0   0   1   1   0   1   1   1    &\  1  \cr
 0   1   0   0   0   0   0   0   0   1   0   1   1   0   0   0   1   1   1   1   0   1   0   0   0   0   1   1   1   1   1   1   1   1   0   0   1   0   0   0   0   1   0   1   0   0   1   1   1   1   1   0   1   0   1   0   1   0   1   1   1   0   0   0   0   0   1   1   0   0   0   1   0   1   0   1   1   0   0   1   1   0   0   1   0   1   1   1   1   1   1   0   1   1   1   1   0   0   1   1   0   1   1    &\  1  \cr
 0   0   1   0   0   0   0   0   0   0   1   0   1   1   0   0   0   1   1   1   1   0   1   0   0   0   0   1   1   1   1   1   1   1   1   0   0   1   0   0   0   0   1   0   1   0   0   1   1   1   1   1   0   1   0   1   0   1   0   1   1   1   0   0   0   0   0   1   1   0   0   0   1   0   1   0   1   1   0   0   1   1   0   0   1   0   1   1   1   1   1   1   0   1   1   1   1   0   0   1   1   0   1    &\  1  \cr
 0   0   0   1   0   0   0   0   0   0   0   1   0   1   1   0   0   0   1   1   1   1   0   1   0   0   0   0   1   1   1   1   1   1   1   1   0   0   1   0   0   0   0   1   0   1   0   0   1   1   1   1   1   0   1   0   1   0   1   0   1   1   1   0   0   0   0   0   1   1   0   0   0   1   0   1   0   1   1   0   0   1   1   0   0   1   0   1   1   1   1   1   1   0   1   1   1   1   0   0   1   1   0    &\  0  \cr
 0   0   0   0   1   0   0   0   1   0   1   1   1   0   1   0   1   1   1   1   0   1   1   0   1   1   1   1   1   0   0   0   0   1   1   0   1   0   0   1   1   0   1   0   1   1   0   1   1   0   1   0   1   0   0   0   0   0   1   0   0   1   1   1   0   1   1   0   0   1   0   0   1   0   0   1   1   0   0   0   0   0   0   1   1   1   0   1   0   0   1   0   0   0   1   1   1   0   0   0   1   0   0    &\  1  \cr
 0   0   0   0   0   1   0   0   1   1   1   0   1   1   0   0   1   0   0   1   0   0   1   1   0   0   0   0   0   0   1   1   1   0   1   0   0   1   0   0   0   1   1   1   0   0   0   1   0   0   0   0   0   0   0   1   0   1   1   0   0   0   1   1   1   1   0   1   0   0   0   0   1   1   1   1   1   1   1   1   0   0   1   0   0   0   0   1   0   1   0   0   1   1   1   1   1   0   1   0   1   0   1    &\  0  \cr
 0   0   0   0   0   0   1   0   1   1   0   0   0   1   1   1   1   0   1   0   0   0   0   1   1   1   1   1   1   1   1   0   0   1   0   0   0   0   1   0   1   0   0   1   1   1   1   1   0   1   0   1   0   1   0   1   1   1   0   0   0   0   0   1   1   0   0   0   1   0   1   0   1   1   0   0   1   1   0   0   1   0   1   1   1   1   1   1   0   1   1   1   1   0   0   1   1   0   1   1   1   0   1    &\  0  \cr
 0   0   0   0   0   0   0   1   0   1   1   0   0   0   1   1   1   1   0   1   0   0   0   0   1   1   1   1   1   1   1   1   0   0   1   0   0   0   0   1   0   1   0   0   1   1   1   1   1   0   1   0   1   0   1   0   1   1   1   0   0   0   0   0   1   1   0   0   0   1   0   1   0   1   1   0   0   1   1   0   0   1   0   1   1   1   1   1   1   0   1   1   1   1   0   0   1   1   0   1   1   1   0    &\  0  \cr
\end{align*}
} 

\medskip
\noindent $i=5$.

{\tiny
\begin{align*}
 1   0   0   0   0   0   0   1   1   0   0   1   1   0   1   1   0   0   0   1   1   1   0   0   1   1   1   0   1   0   1   1   1   0   0   0   0   1   0   0   1   1   0   0   0   0   0   1   0   1   0   1   0   1   1   0   1   0   0   1   0   0   1   0   1   0   0   1   1   1   1   0   0   1   0   0   0   1   1   0   1   0   1   0   0   0   0   1   1   1   1   1   1   1   0   1   1   1   0   1   1   0   1    &\  1  \cr
 0   1   0   0   0   0   0   0   1   1   0   0   1   1   0   1   1   0   0   0   1   1   1   0   0   1   1   1   0   1   0   1   1   1   0   0   0   0   1   0   0   1   1   0   0   0   0   0   1   0   1   0   1   0   1   1   0   1   0   0   1   0   0   1   0   1   0   0   1   1   1   1   0   0   1   0   0   0   1   1   0   1   0   1   0   0   0   0   1   1   1   1   1   1   1   0   1   1   1   0   1   1   0    &\  0  \cr
 0   0   1   0   0   0   0   0   0   1   1   0   0   1   1   0   1   1   0   0   0   1   1   1   0   0   1   1   1   0   1   0   1   1   1   0   0   0   0   1   0   0   1   1   0   0   0   0   0   1   0   1   0   1   0   1   1   0   1   0   0   1   0   0   1   0   1   0   0   1   1   1   1   0   0   1   0   0   0   1   1   0   1   0   1   0   0   0   0   1   1   1   1   1   1   1   0   1   1   1   0   1   1    &\  1  \cr
 0   0   0   1   0   0   0   0   0   0   1   1   0   0   1   1   0   1   1   0   0   0   1   1   1   0   0   1   1   1   0   1   0   1   1   1   0   0   0   0   1   0   0   1   1   0   0   0   0   0   1   0   1   0   1   0   1   1   0   1   0   0   1   0   0   1   0   1   0   0   1   1   1   1   0   0   1   0   0   0   1   1   0   1   0   1   0   0   0   0   1   1   1   1   1   1   1   0   1   1   1   0   1    &\  1  \cr
 0   0   0   0   1   0   0   1   1   0   0   0   0   0   1   0   1   0   1   0   1   1   0   1   0   0   1   0   0   1   0   1   0   0   1   1   1   1   0   0   1   0   0   0   1   1   0   1   0   1   0   0   0   0   1   1   1   1   1   1   1   0   1   1   1   0   1   1   0   1   1   1   1   0   1   0   0   0   1   0   1   1   0   0   1   0   1   1   1   1   1   0   0   0   1   0   0   0   0   0   0   1   1    &\  1  \cr
 0   0   0   0   0   1   0   1   0   1   0   1   1   0   1   0   0   1   0   0   1   0   1   0   0   1   1   1   1   0   0   1   0   0   0   1   1   0   1   0   1   0   0   0   0   1   1   1   1   1   1   1   0   1   1   1   0   1   1   0   1   1   1   1   0   1   0   0   0   1   0   1   1   0   0   1   0   1   1   1   1   1   0   0   0   1   0   0   0   0   0   0   1   1   0   0   1   1   0   1   1   0   0    &\  1  \cr
 0   0   0   0   0   0   1   1   0   0   1   1   0   1   1   0   0   0   1   1   1   0   0   1   1   1   0   1   0   1   1   1   0   0   0   0   1   0   0   1   1   0   0   0   0   0   1   0   1   0   1   0   1   1   0   1   0   0   1   0   0   1   0   1   0   0   1   1   1   1   0   0   1   0   0   0   1   1   0   1   0   1   0   0   0   0   1   1   1   1   1   1   1   0   1   1   1   0   1   1   0   1   1    &\  0  \cr
\end{align*}
} 

\medskip
\noindent $i=6$.

{\tiny
\begin{align*}
 1   0   0   0   0   0   0   1   0   0   0   1   0   0   1   1   0   0   0   1   0   1   1   1   0   1   0   1   1   0   1   1   0   0   0   0   0   1   1   0   0   1   1   0   1   0   1   0   0   1   1   1   0   0   1   1   1   1   0   1   1   0   1   0   0   0   0   1   0   1   0   1   0   1   1   1   1   1   0   1   0   0   1   0   1   0   0   0   1   1   0   1   1   1   0   0   0   1   1   1   1   1   1    &\  0  \cr
 0   1   0   0   0   0   0   0   1   0   0   0   1   0   0   1   1   0   0   0   1   0   1   1   1   0   1   0   1   1   0   1   1   0   0   0   0   0   1   1   0   0   1   1   0   1   0   1   0   0   1   1   1   0   0   1   1   1   1   0   1   1   0   1   0   0   0   0   1   0   1   0   1   0   1   1   1   1   1   0   1   0   0   1   0   1   0   0   0   1   1   0   1   1   1   0   0   0   1   1   1   1   1    &\  0  \cr
 0   0   1   0   0   0   0   0   0   1   0   0   0   1   0   0   1   1   0   0   0   1   0   1   1   1   0   1   0   1   1   0   1   1   0   0   0   0   0   1   1   0   0   1   1   0   1   0   1   0   0   1   1   1   0   0   1   1   1   1   0   1   1   0   1   0   0   0   0   1   0   1   0   1   0   1   1   1   1   1   0   1   0   0   1   0   1   0   0   0   1   1   0   1   1   1   0   0   0   1   1   1   1    &\  1  \cr
 0   0   0   1   0   0   0   1   0   0   1   1   0   0   0   1   0   1   1   1   0   1   0   1   1   0   1   1   0   0   0   0   0   1   1   0   0   1   1   0   1   0   1   0   0   1   1   1   0   0   1   1   1   1   0   1   1   0   1   0   0   0   0   1   0   1   0   1   0   1   1   1   1   1   0   1   0   0   1   0   1   0   0   0   1   1   0   1   1   1   0   0   0   1   1   1   1   1   1   1   0   0   0    &\  0  \cr
 0   0   0   0   1   0   0   0   1   0   0   1   1   0   0   0   1   0   1   1   1   0   1   0   1   1   0   1   1   0   0   0   0   0   1   1   0   0   1   1   0   1   0   1   0   0   1   1   1   0   0   1   1   1   1   0   1   1   0   1   0   0   0   0   1   0   1   0   1   0   1   1   1   1   1   0   1   0   0   1   0   1   0   0   0   1   1   0   1   1   1   0   0   0   1   1   1   1   1   1   1   0   0    &\  1  \cr
 0   0   0   0   0   1   0   0   0   1   0   0   1   1   0   0   0   1   0   1   1   1   0   1   0   1   1   0   1   1   0   0   0   0   0   1   1   0   0   1   1   0   1   0   1   0   0   1   1   1   0   0   1   1   1   1   0   1   1   0   1   0   0   0   0   1   0   1   0   1   0   1   1   1   1   1   0   1   0   0   1   0   1   0   0   0   1   1   0   1   1   1   0   0   0   1   1   1   1   1   1   1   0    &\  0  \cr
 0   0   0   0   0   0   1   0   0   0   1   0   0   1   1   0   0   0   1   0   1   1   1   0   1   0   1   1   0   1   1   0   0   0   0   0   1   1   0   0   1   1   0   1   0   1   0   0   1   1   1   0   0   1   1   1   1   0   1   1   0   1   0   0   0   0   1   0   1   0   1   0   1   1   1   1   1   0   1   0   0   1   0   1   0   0   0   1   1   0   1   1   1   0   0   0   1   1   1   1   1   1   1    &\  0  \cr
\end{align*}
}

\medskip
\noindent $i=7$.

{\tiny
\begin{align*}
 1   0   0   0   1   1   1   1   0   1   0   1   1   0   0   1   0   0   0   1   1   1   1   0   1   0   1   1   0   0   1   0   0   0   1   1   1   1   0   1   0   1   1   0   0   1   0   0   0   1   1   1   1   0   1   0   1   1   0   0   1   0   0   0   1   1   1   1   0   1   0   1   1   0   0   1   0   0   0   1   1   1   1   0   1   0   1   1   0   0   1   0   0   0   1   1   1   1   0   1   0   1   1    &\  1  \cr
 0   1   0   0   0   1   1   1   1   0   1   0   1   1   0   0   1   0   0   0   1   1   1   1   0   1   0   1   1   0   0   1   0   0   0   1   1   1   1   0   1   0   1   1   0   0   1   0   0   0   1   1   1   1   0   1   0   1   1   0   0   1   0   0   0   1   1   1   1   0   1   0   1   1   0   0   1   0   0   0   1   1   1   1   0   1   0   1   1   0   0   1   0   0   0   1   1   1   1   0   1   0   1    &\  0  \cr
 0   0   1   0   0   0   1   1   1   1   0   1   0   1   1   0   0   1   0   0   0   1   1   1   1   0   1   0   1   1   0   0   1   0   0   0   1   1   1   1   0   1   0   1   1   0   0   1   0   0   0   1   1   1   1   0   1   0   1   1   0   0   1   0   0   0   1   1   1   1   0   1   0   1   1   0   0   1   0   0   0   1   1   1   1   0   1   0   1   1   0   0   1   0   0   0   1   1   1   1   0   1   0    &\  0  \cr
 0   0   0   1   1   1   1   0   1   0   1   1   0   0   1   0   0   0   1   1   1   1   0   1   0   1   1   0   0   1   0   0   0   1   1   1   1   0   1   0   1   1   0   0   1   0   0   0   1   1   1   1   0   1   0   1   1   0   0   1   0   0   0   1   1   1   1   0   1   0   1   1   0   0   1   0   0   0   1   1   1   1   0   1   0   1   1   0   0   1   0   0   0   1   1   1   1   0   1   0   1   1   0    &\  1  \cr
\end{align*}
} 

\vskip-2.5em
\noindent $i=8$.

{\tiny
\begin{align*}
 1   0   0   0   0   0   0   0   1   0   1   1   1   0   1   1   1   1   0   1   1   0   0   1   1   1   1   1   1   1   1   0   0   1   0   1   1   0   1   0   1   1   0   1   1   1   0   1   0   1   0   1   0   1   1   1   0   0   1   0   0   1   1   0   1   1   0   1   0   0   1   1   0   0   1   1   0   1   0   0   0   1   1   1   0   1   1   0   1   1   0   0   0   1   0   0   0   1   0   0   1   1   1    &\  1  \cr
 0   1   0   0   0   0   0   0   0   1   0   1   1   1   0   1   1   1   1   0   1   1   0   0   1   1   1   1   1   1   1   1   0   0   1   0   1   1   0   1   0   1   1   0   1   1   1   0   1   0   1   0   1   0   1   1   1   0   0   1   0   0   1   1   0   1   1   0   1   0   0   1   1   0   0   1   1   0   1   0   0   0   1   1   1   0   1   1   0   1   1   0   0   0   1   0   0   0   1   0   0   1   1    &\  0  \cr
 0   0   1   0   0   0   0   0   1   0   0   1   0   1   0   1   0   0   1   0   1   1   1   1   1   0   0   0   0   0   0   1   1   1   0   0   1   1   0   0   0   1   1   0   1   0   1   0   0   0   0   0   0   0   1   0   1   1   1   0   1   1   1   1   0   1   1   0   0   1   1   1   1   1   1   1   1   0   0   1   0   1   1   0   1   0   1   1   0   1   1   1   0   1   0   1   0   1   0   1   1   1   0    &\  0  \cr
 0   0   0   1   0   0   0   0   0   1   0   0   1   0   1   0   1   0   0   1   0   1   1   1   1   1   0   0   0   0   0   0   1   1   1   0   0   1   1   0   0   0   1   1   0   1   0   1   0   0   0   0   0   0   0   1   0   1   1   1   0   1   1   1   1   0   1   1   0   0   1   1   1   1   1   1   1   1   0   0   1   0   1   1   0   1   0   1   1   0   1   1   1   0   1   0   1   0   1   0   1   1   1    &\  1  \cr
 0   0   0   0   1   0   0   0   0   0   1   0   0   1   0   1   0   1   0   0   1   0   1   1   1   1   1   0   0   0   0   0   0   1   1   1   0   0   1   1   0   0   0   1   1   0   1   0   1   0   0   0   0   0   0   0   1   0   1   1   1   0   1   1   1   1   0   1   1   0   0   1   1   1   1   1   1   1   1   0   0   1   0   1   1   0   1   0   1   1   0   1   1   1   0   1   0   1   0   1   0   1   1    &\  0  \cr
 0   0   0   0   0   1   0   0   1   0   1   0   1   0   0   1   0   1   1   1   1   1   0   0   0   0   0   0   1   1   1   0   0   1   1   0   0   0   1   1   0   1   0   1   0   0   0   0   0   0   0   1   0   1   1   1   0   1   1   1   1   0   1   1   0   0   1   1   1   1   1   1   1   1   0   0   1   0   1   1   0   1   0   1   1   0   1   1   1   0   1   0   1   0   1   0   1   1   1   0   0   1   0    &\  0  \cr
 0   0   0   0   0   0   1   0   1   1   1   0   1   1   1   1   0   1   1   0   0   1   1   1   1   1   1   1   1   0   0   1   0   1   1   0   1   0   1   1   0   1   1   1   0   1   0   1   0   1   0   1   1   1   0   0   1   0   0   1   1   0   1   1   0   1   0   0   1   1   0   0   1   1   0   1   0   0   0   1   1   1   0   1   1   0   1   1   0   0   0   1   0   0   0   1   0   0   1   1   1   1   0    &\  0  \cr
 0   0   0   0   0   0   0   1   0   1   1   1   0   1   1   1   1   0   1   1   0   0   1   1   1   1   1   1   1   1   0   0   1   0   1   1   0   1   0   1   1   0   1   1   1   0   1   0   1   0   1   0   1   1   1   0   0   1   0   0   1   1   0   1   1   0   1   0   0   1   1   0   0   1   1   0   1   0   0   0   1   1   1   0   1   1   0   1   1   0   0   0   1   0   0   0   1   0   0   1   1   1   1    &\  0  \cr
\end{align*}
} 

\noindent $i=9$.

{\tiny
\begin{align*}
 1   0   0   0   0   0   1   1   0   1   0   0   1   1   0   0   1   0   0   1   0   1   0   0   0   0   0   1   1   0   1   0   0   1   1   0   0   1   0   0   1   0   1   0   0   0   0   0   1   1   0   1   0   0   1   1   0   0   1   0   0   1   0   1   0   0   0   0   0   1   1   0   1   0   0   1   1   0   0   1   0   0   1   0   1   0   0   0   0   0   1   1   0   1   0   0   1   1   0   0   1   0   0    &\  1  \cr
 0   1   0   0   0   0   0   1   1   0   1   0   0   1   1   0   0   1   0   0   1   0   1   0   0   0   0   0   1   1   0   1   0   0   1   1   0   0   1   0   0   1   0   1   0   0   0   0   0   1   1   0   1   0   0   1   1   0   0   1   0   0   1   0   1   0   0   0   0   0   1   1   0   1   0   0   1   1   0   0   1   0   0   1   0   1   0   0   0   0   0   1   1   0   1   0   0   1   1   0   0   1   0    &\  1  \cr
 0   0   1   0   0   0   1   1   1   0   0   1   1   1   1   1   1   0   1   1   0   0   0   1   0   0   0   1   1   1   0   0   1   1   1   1   1   1   0   1   1   0   0   0   1   0   0   0   1   1   1   0   0   1   1   1   1   1   1   0   1   1   0   0   0   1   0   0   0   1   1   1   0   0   1   1   1   1   1   1   0   1   1   0   0   0   1   0   0   0   1   1   1   0   0   1   1   1   1   1   1   0   1    &\  1  \cr
 0   0   0   1   0   0   0   1   1   1   0   0   1   1   1   1   1   1   0   1   1   0   0   0   1   0   0   0   1   1   1   0   0   1   1   1   1   1   1   0   1   1   0   0   0   1   0   0   0   1   1   1   0   0   1   1   1   1   1   1   0   1   1   0   0   0   1   0   0   0   1   1   1   0   0   1   1   1   1   1   1   0   1   1   0   0   0   1   0   0   0   1   1   1   0   0   1   1   1   1   1   1   0    &\  0  \cr
 0   0   0   0   1   0   1   1   1   0   1   0   1   0   1   1   0   1   1   1   1   0   0   0   0   1   0   1   1   1   0   1   0   1   0   1   1   0   1   1   1   1   0   0   0   0   1   0   1   1   1   0   1   0   1   0   1   1   0   1   1   1   1   0   0   0   0   1   0   1   1   1   0   1   0   1   0   1   1   0   1   1   1   1   0   0   0   0   1   0   1   1   1   0   1   0   1   0   1   1   0   1   1    &\  1  \cr
 0   0   0   0   0   1   1   0   1   0   0   1   1   0   0   1   0   0   1   0   1   0   0   0   0   0   1   1   0   1   0   0   1   1   0   0   1   0   0   1   0   1   0   0   0   0   0   1   1   0   1   0   0   1   1   0   0   1   0   0   1   0   1   0   0   0   0   0   1   1   0   1   0   0   1   1   0   0   1   0   0   1   0   1   0   0   0   0   0   1   1   0   1   0   0   1   1   0   0   1   0   0   1    &\  0  \cr
\end{align*}
} 

\noindent $i=10$.

{\tiny
\begin{align*}
 1   0   0   0   0   0   0   0   1   1   1   1   1   0   0   0   1   0   0   1   0   1   0   0   0   1   0   0   0   1   0   1   1   0   1   0   1   0   1   1   1   0   0   1   0   0   0   1   1   0   0   1   0   0   1   0   0   1   1   1   0   0   0   0   0   0   1   0   0   0   0   1   0   0   1   1   0   1   1   1   1   0   0   1   1   0   0   1   1   1   0   1   1   1   1   1   1   0   0   1   0   1   1    &\  0  \cr
 0   1   0   0   0   0   0   0   0   1   1   1   1   1   0   0   0   1   0   0   1   0   1   0   0   0   1   0   0   0   1   0   1   1   0   1   0   1   0   1   1   1   0   0   1   0   0   0   1   1   0   0   1   0   0   1   0   0   1   1   1   0   0   0   0   0   0   1   0   0   0   0   1   0   0   1   1   0   1   1   1   1   0   0   1   1   0   0   1   1   1   0   1   1   1   1   1   1   0   0   1   0   1    &\  0  \cr
 0   0   1   0   0   0   0   0   1   1   0   0   0   1   1   0   1   0   1   1   0   0   0   1   0   1   0   1   0   1   0   0   1   1   0   0   0   0   0   1   0   1   1   1   0   1   0   1   1   1   1   1   0   1   1   0   1   1   1   0   1   1   0   0   0   0   1   0   1   0   0   1   0   1   1   1   1   0   1   0   0   1   1   1   1   1   1   1   1   0   1   0   1   0   0   0   0   1   1   1   0   0   1    &\  1  \cr
 0   0   0   1   0   0   0   0   1   0   0   1   1   0   1   1   1   1   0   0   1   1   0   0   1   1   1   0   1   1   1   1   1   1   0   0   1   0   1   1   0   0   1   0   1   0   1   1   0   1   1   0   1   0   0   1   0   0   0   0   0   1   1   0   0   0   1   1   0   1   0   1   1   0   0   0   1   0   1   0   1   0   1   0   0   1   1   0   0   0   0   0   1   0   1   1   1   0   1   0   1   1   1    &\  0  \cr
 0   0   0   0   1   0   0   0   0   1   0   0   1   1   0   1   1   1   1   0   0   1   1   0   0   1   1   1   0   1   1   1   1   1   1   0   0   1   0   1   1   0   0   1   0   1   0   1   1   0   1   1   0   1   0   0   1   0   0   0   0   0   1   1   0   0   0   1   1   0   1   0   1   1   0   0   0   1   0   1   0   1   0   1   0   0   1   1   0   0   0   0   0   1   0   1   1   1   0   1   0   1   1    &\  1  \cr
 0   0   0   0   0   1   0   0   0   0   1   0   0   1   1   0   1   1   1   1   0   0   1   1   0   0   1   1   1   0   1   1   1   1   1   1   0   0   1   0   1   1   0   0   1   0   1   0   1   1   0   1   1   0   1   0   0   1   0   0   0   0   0   1   1   0   0   0   1   1   0   1   0   1   1   0   0   0   1   0   1   0   1   0   1   0   0   1   1   0   0   0   0   0   1   0   1   1   1   0   1   0   1    &\  0  \cr
 0   0   0   0   0   0   1   0   0   0   0   1   0   0   1   1   0   1   1   1   1   0   0   1   1   0   0   1   1   1   0   1   1   1   1   1   1   0   0   1   0   1   1   0   0   1   0   1   0   1   1   0   1   1   0   1   0   0   1   0   0   0   0   0   1   1   0   0   0   1   1   0   1   0   1   1   0   0   0   1   0   1   0   1   0   1   0   0   1   1   0   0   0   0   0   1   0   1   1   1   0   1   0    &\  0  \cr
 0   0   0   0   0   0   0   1   1   1   1   1   0   0   0   1   0   0   1   0   1   0   0   0   1   0   0   0   1   0   1   1   0   1   0   1   0   1   1   1   0   0   1   0   0   0   1   1   0   0   1   0   0   1   0   0   1   1   1   0   0   0   0   0   0   1   0   0   0   0   1   0   0   1   1   0   1   1   1   1   0   0   1   1   0   0   1   1   1   0   1   1   1   1   1   1   0   0   1   0   1   1   0    &\  0  \cr
\end{align*}
} 

\noindent $i=11$.

{\tiny
\begin{align*}
 1   0   0   0   0   0   1   0   0   0   0   1   1   0   0   0   1   0   1   0   0   1   1   1   1   0   1   0   0   0   1   1   1   0   0   1   0   0   1   0   1   1   0   1   1   1   0   1   1   0   0   1   1   0   1   0   1   0   1   1   1   1   1   1   0   0   0   0   0   1   0   0   0   0   1   1   0   0   0   1   0   1   0   0   1   1   1   1   0   1   0   0   0   1   1   1   0   0   1   0   0   1   0    &\  1  \cr
 0   1   0   0   0   0   1   1   0   0   0   1   0   1   0   0   1   1   1   1   0   1   0   0   0   1   1   1   0   0   1   0   0   1   0   1   1   0   1   1   1   0   1   1   0   0   1   1   0   1   0   1   0   1   1   1   1   1   1   0   0   0   0   0   1   0   0   0   0   1   1   0   0   0   1   0   1   0   0   1   1   1   1   0   1   0   0   0   1   1   1   0   0   1   0   0   1   0   1   1   0   1   1    &\  0  \cr
 0   0   1   0   0   0   0   1   1   0   0   0   1   0   1   0   0   1   1   1   1   0   1   0   0   0   1   1   1   0   0   1   0   0   1   0   1   1   0   1   1   1   0   1   1   0   0   1   1   0   1   0   1   0   1   1   1   1   1   1   0   0   0   0   0   1   0   0   0   0   1   1   0   0   0   1   0   1   0   0   1   1   1   1   0   1   0   0   0   1   1   1   0   0   1   0   0   1   0   1   1   0   1    &\  0  \cr
 0   0   0   1   0   0   0   0   1   1   0   0   0   1   0   1   0   0   1   1   1   1   0   1   0   0   0   1   1   1   0   0   1   0   0   1   0   1   1   0   1   1   1   0   1   1   0   0   1   1   0   1   0   1   0   1   1   1   1   1   1   0   0   0   0   0   1   0   0   0   0   1   1   0   0   0   1   0   1   0   0   1   1   1   1   0   1   0   0   0   1   1   1   0   0   1   0   0   1   0   1   1   0    &\  1  \cr
 0   0   0   0   1   0   0   0   0   1   1   0   0   0   1   0   1   0   0   1   1   1   1   0   1   0   0   0   1   1   1   0   0   1   0   0   1   0   1   1   0   1   1   1   0   1   1   0   0   1   1   0   1   0   1   0   1   1   1   1   1   1   0   0   0   0   0   1   0   0   0   0   1   1   0   0   0   1   0   1   0   0   1   1   1   1   0   1   0   0   0   1   1   1   0   0   1   0   0   1   0   1   1    &\  0  \cr
 0   0   0   0   0   1   0   0   0   0   1   1   0   0   0   1   0   1   0   0   1   1   1   1   0   1   0   0   0   1   1   1   0   0   1   0   0   1   0   1   1   0   1   1   1   0   1   1   0   0   1   1   0   1   0   1   0   1   1   1   1   1   1   0   0   0   0   0   1   0   0   0   0   1   1   0   0   0   1   0   1   0   0   1   1   1   1   0   1   0   0   0   1   1   1   0   0   1   0   0   1   0   1    &\  0  \cr
\end{align*}
} 

\noindent $i=12$.

{\tiny
\begin{align*}
 1   0   0   0   0   0   0   0   1   1   0   0   0   1   1   1   1   1   0   1   0   0   1   0   1   1   0   1   1   0   1   0   0   0   0   0   1   0   0   1   0   1   0   1   1   1   0   0   1   1   1   1   0   0   0   0   0   0   0   1   1   0   0   0   1   1   1   1   1   0   1   0   0   1   0   1   1   0   1   1   0   1   0   0   0   0   0   1   0   0   1   0   1   0   1   1   1   0   0   1   1   1   1    &\  0  \cr
 0   1   0   0   0   0   0   0   1   0   1   0   0   1   0   0   0   0   1   1   1   0   1   1   1   0   1   1   0   1   1   1   0   0   0   0   1   1   0   1   1   1   1   1   0   0   1   0   1   0   0   0   1   0   0   0   0   0   0   1   0   1   0   0   1   0   0   0   0   1   1   1   0   1   1   1   0   1   1   0   1   1   1   0   0   0   0   1   1   0   1   1   1   1   1   0   0   1   0   1   0   0   0    &\  1  \cr
 0   0   1   0   0   0   0   0   0   1   0   1   0   0   1   0   0   0   0   1   1   1   0   1   1   1   0   1   1   0   1   1   1   0   0   0   0   1   1   0   1   1   1   1   1   0   0   1   0   1   0   0   0   1   0   0   0   0   0   0   1   0   1   0   0   1   0   0   0   0   1   1   1   0   1   1   1   0   1   1   0   1   1   1   0   0   0   0   1   1   0   1   1   1   1   1   0   0   1   0   1   0   0    &\  0  \cr
 0   0   0   1   0   0   0   0   0   0   1   0   1   0   0   1   0   0   0   0   1   1   1   0   1   1   1   0   1   1   0   1   1   1   0   0   0   0   1   1   0   1   1   1   1   1   0   0   1   0   1   0   0   0   1   0   0   0   0   0   0   1   0   1   0   0   1   0   0   0   0   1   1   1   0   1   1   1   0   1   1   0   1   1   1   0   0   0   0   1   1   0   1   1   1   1   1   0   0   1   0   1   0    &\  1  \cr
 0   0   0   0   1   0   0   0   1   1   0   1   0   0   1   1   0   1   0   1   0   1   0   1   1   0   1   0   1   1   0   0   1   1   1   0   1   0   0   0   1   1   1   0   0   0   1   0   1   0   1   0   0   0   0   1   0   0   0   1   1   0   1   0   0   1   1   0   1   0   1   0   1   0   1   1   0   1   0   1   1   0   0   1   1   1   0   1   0   0   0   1   1   1   0   0   0   1   0   1   0   1   0    &\  0  \cr
 0   0   0   0   0   1   0   0   1   0   1   0   1   1   1   0   0   1   1   1   1   0   0   0   0   0   0   0   1   1   0   0   0   1   1   1   1   1   0   1   0   0   1   0   1   1   0   1   1   0   1   0   0   0   0   0   1   0   0   1   0   1   0   1   1   1   0   0   1   1   1   1   0   0   0   0   0   0   0   1   1   0   0   0   1   1   1   1   1   0   1   0   0   1   0   1   1   0   1   1   0   1   0    &\  1  \cr
 0   0   0   0   0   0   1   0   1   0   0   1   0   0   0   0   1   1   1   0   1   1   1   0   1   1   0   1   1   1   0   0   0   0   1   1   0   1   1   1   1   1   0   0   1   0   1   0   0   0   1   0   0   0   0   0   0   1   0   1   0   0   1   0   0   0   0   1   1   1   0   1   1   1   0   1   1   0   1   1   1   0   0   0   0   1   1   0   1   1   1   1   1   0   0   1   0   1   0   0   0   1   0    &\  1  \cr
 0   0   0   0   0   0   0   1   1   0   0   0   1   1   1   1   1   0   1   0   0   1   0   1   1   0   1   1   0   1   0   0   0   0   0   1   0   0   1   0   1   0   1   1   1   0   0   1   1   1   1   0   0   0   0   0   0   0   1   1   0   0   0   1   1   1   1   1   0   1   0   0   1   0   1   1   0   1   1   0   1   0   0   0   0   0   1   0   0   1   0   1   0   1   1   1   0   0   1   1   1   1   0    &\  0  \cr
\end{align*}
} 

\vskip-2em
\noindent $i=13$.

{\tiny
\begin{align*}
 1   0   0   0   0   0   0   0   1   0   0   1   0   1   1   1   1   1   1   1   1   0   0   0   1   1   0   1   0   1   0   1   0   1   1   1   1   0   1   1   0   0   1   1   0   0   1   0   1   0   0   1   0   0   0   1   0   0   0   1   1   0   0   0   1   1   1   1   0   0   0   0   1   0   0   0   0   1   0   1   0   0   0   0   0   1   1   1   1   1   0   0   1   1   1   1   1   1   0   1   0   1   0    &\  0  \cr
 0   1   0   0   0   0   0   0   1   1   0   1   1   1   0   0   0   0   0   0   0   1   0   0   1   0   1   1   1   1   1   1   1   1   0   0   0   1   1   0   1   0   1   0   1   0   1   1   1   1   0   1   1   0   0   1   1   0   0   1   0   1   0   0   1   0   0   0   1   0   0   0   1   1   0   0   0   1   1   1   1   0   0   0   0   1   0   0   0   0   1   0   1   0   0   0   0   0   1   1   1   1   1    &\  1  \cr
 0   0   1   0   0   0   0   0   0   1   1   0   1   1   1   0   0   0   0   0   0   0   1   0   0   1   0   1   1   1   1   1   1   1   1   0   0   0   1   1   0   1   0   1   0   1   0   1   1   1   1   0   1   1   0   0   1   1   0   0   1   0   1   0   0   1   0   0   0   1   0   0   0   1   1   0   0   0   1   1   1   1   0   0   0   0   1   0   0   0   0   1   0   1   0   0   0   0   0   1   1   1   1    &\  0  \cr
 0   0   0   1   0   0   0   0   1   0   1   0   0   0   0   0   1   1   1   1   1   0   0   1   1   1   1   1   1   0   1   0   1   0   0   0   1   0   1   0   1   0   0   1   1   0   0   0   0   1   1   0   0   1   1   1   0   1   1   1   1   1   0   1   1   1   0   1   0   0   1   0   1   0   1   1   0   1   0   0   1   1   1   0   0   1   1   0   1   1   0   0   0   1   0   1   1   1   0   1   1   0   1    &\  0  \cr
 0   0   0   0   1   0   0   0   0   1   0   1   0   0   0   0   0   1   1   1   1   1   0   0   1   1   1   1   1   1   0   1   0   1   0   0   0   1   0   1   0   1   0   0   1   1   0   0   0   0   1   1   0   0   1   1   1   0   1   1   1   1   1   0   1   1   1   0   1   0   0   1   0   1   0   1   1   0   1   0   0   1   1   1   0   0   1   1   0   1   1   0   0   0   1   0   1   1   1   0   1   1   0    &\  1  \cr
 0   0   0   0   0   1   0   0   1   0   1   1   1   1   1   1   1   1   0   0   0   1   1   0   1   0   1   0   1   0   1   1   1   1   0   1   1   0   0   1   1   0   0   1   0   1   0   0   1   0   0   0   1   0   0   0   1   1   0   0   0   1   1   1   1   0   0   0   0   1   0   0   0   0   1   0   1   0   0   0   0   0   1   1   1   1   1   0   0   1   1   1   1   1   1   0   1   0   1   0   0   0   1    &\  0  \cr
 0   0   0   0   0   0   1   0   0   1   0   1   1   1   1   1   1   1   1   0   0   0   1   1   0   1   0   1   0   1   0   1   1   1   1   0   1   1   0   0   1   1   0   0   1   0   1   0   0   1   0   0   0   1   0   0   0   1   1   0   0   0   1   1   1   1   0   0   0   0   1   0   0   0   0   1   0   1   0   0   0   0   0   1   1   1   1   1   0   0   1   1   1   1   1   1   0   1   0   1   0   0   0    &\  0  \cr
 0   0   0   0   0   0   0   1   0   0   1   0   1   1   1   1   1   1   1   1   0   0   0   1   1   0   1   0   1   0   1   0   1   1   1   1   0   1   1   0   0   1   1   0   0   1   0   1   0   0   1   0   0   0   1   0   0   0   1   1   0   0   0   1   1   1   1   0   0   0   0   1   0   0   0   0   1   0   1   0   0   0   0   0   1   1   1   1   1   0   0   1   1   1   1   1   1   0   1   0   1   0   0    &\  1  \cr
\end{align*}
}

\medskip
\noindent $i=14$.

{\tiny
\begin{align*}
 1   0   0   0   0   0   1   0   1   1   1   1   1   1   0   0   1   0   1   0   1   0   0   0   1   1   0   0   1   1   1   1   0   1   1   1   0   1   0   1   1   0   1   0   0   1   1   0   1   1   0   0   0   1   0   0   1   0   0   0   0   1   1   1   0   0   0   0   0   1   0   1   1   1   1   1   1   0   0   1   0   1   0   1   0   0   0   1   1   0   0   1   1   1   1   0   1   1   1   0   1   0   1    &\  0  \cr
 0   1   0   0   0   0   1   1   1   0   0   0   0   0   1   0   1   1   1   1   1   1   0   0   1   0   1   0   1   0   0   0   1   1   0   0   1   1   1   1   0   1   1   1   0   1   0   1   1   0   1   0   0   1   1   0   1   1   0   0   0   1   0   0   1   0   0   0   0   1   1   1   0   0   0   0   0   1   0   1   1   1   1   1   1   0   0   1   0   1   0   1   0   0   0   1   1   0   0   1   1   1   1    &\  0  \cr
 0   0   1   0   0   0   0   1   1   1   0   0   0   0   0   1   0   1   1   1   1   1   1   0   0   1   0   1   0   1   0   0   0   1   1   0   0   1   1   1   1   0   1   1   1   0   1   0   1   1   0   1   0   0   1   1   0   1   1   0   0   0   1   0   0   1   0   0   0   0   1   1   1   0   0   0   0   0   1   0   1   1   1   1   1   1   0   0   1   0   1   0   1   0   0   0   1   1   0   0   1   1   1    &\  0  \cr
 0   0   0   1   0   0   1   0   0   0   0   1   1   1   0   0   0   0   0   1   0   1   1   1   1   1   1   0   0   1   0   1   0   1   0   0   0   1   1   0   0   1   1   1   1   0   1   1   1   0   1   0   1   1   0   1   0   0   1   1   0   1   1   0   0   0   1   0   0   1   0   0   0   0   1   1   1   0   0   0   0   0   1   0   1   1   1   1   1   1   0   0   1   0   1   0   1   0   0   0   1   1   0    &\  0  \cr
 0   0   0   0   1   0   1   1   1   1   1   1   0   0   1   0   1   0   1   0   0   0   1   1   0   0   1   1   1   1   0   1   1   1   0   1   0   1   1   0   1   0   0   1   1   0   1   1   0   0   0   1   0   0   1   0   0   0   0   1   1   1   0   0   0   0   0   1   0   1   1   1   1   1   1   0   0   1   0   1   0   1   0   0   0   1   1   0   0   1   1   1   1   0   1   1   1   0   1   0   1   1   0    &\  1  \cr
 0   0   0   0   0   1   0   1   1   1   1   1   1   0   0   1   0   1   0   1   0   0   0   1   1   0   0   1   1   1   1   0   1   1   1   0   1   0   1   1   0   1   0   0   1   1   0   1   1   0   0   0   1   0   0   1   0   0   0   0   1   1   1   0   0   0   0   0   1   0   1   1   1   1   1   1   0   0   1   0   1   0   1   0   0   0   1   1   0   0   1   1   1   1   0   1   1   1   0   1   0   1   1    &\  1  \cr
\end{align*}
} 

\medskip
\noindent $i=15$.

{\tiny
\begin{align*}
 1   0   0   0   0   0   0   0   1   1   0   1   0   0   1   0   1   1   0   0   0   0   0   0   0   1   1   0   1   0   0   1   0   1   1   0   0   0   0   0   0   0   1   1   0   1   0   0   1   0   1   1   0   0   0   0   0   0   0   1   1   0   1   0   0   1   0   1   1   0   0   0   0   0   0   0   1   1   0   1   0   0   1   0   1   1   0   0   0   0   0   0   0   1   1   0   1   0   0   1   0   1   1    &\  0  \cr
 0   1   0   0   0   0   0   0   1   0   1   1   1   0   1   1   1   0   1   0   0   0   0   0   0   1   0   1   1   1   0   1   1   1   0   1   0   0   0   0   0   0   1   0   1   1   1   0   1   1   1   0   1   0   0   0   0   0   0   1   0   1   1   1   0   1   1   1   0   1   0   0   0   0   0   0   1   0   1   1   1   0   1   1   1   0   1   0   0   0   0   0   0   1   0   1   1   1   0   1   1   1   0    &\  1  \cr
 0   0   1   0   0   0   0   0   1   0   0   0   1   1   1   1   0   0   0   1   0   0   0   0   0   1   0   0   0   1   1   1   1   0   0   0   1   0   0   0   0   0   1   0   0   0   1   1   1   1   0   0   0   1   0   0   0   0   0   1   0   0   0   1   1   1   1   0   0   0   1   0   0   0   0   0   1   0   0   0   1   1   1   1   0   0   0   1   0   0   0   0   0   1   0   0   0   1   1   1   1   0   0    &\  0  \cr
 0   0   0   1   0   0   0   0   0   1   0   0   0   1   1   1   1   0   0   0   1   0   0   0   0   0   1   0   0   0   1   1   1   1   0   0   0   1   0   0   0   0   0   1   0   0   0   1   1   1   1   0   0   0   1   0   0   0   0   0   1   0   0   0   1   1   1   1   0   0   0   1   0   0   0   0   0   1   0   0   0   1   1   1   1   0   0   0   1   0   0   0   0   0   1   0   0   0   1   1   1   1   0    &\  1  \cr
 0   0   0   0   1   0   0   0   1   1   1   1   0   0   0   1   0   0   0   0   0   1   0   0   0   1   1   1   1   0   0   0   1   0   0   0   0   0   1   0   0   0   1   1   1   1   0   0   0   1   0   0   0   0   0   1   0   0   0   1   1   1   1   0   0   0   1   0   0   0   0   0   1   0   0   0   1   1   1   1   0   0   0   1   0   0   0   0   0   1   0   0   0   1   1   1   1   0   0   0   1   0   0    &\  1  \cr
 0   0   0   0   0   1   0   0   0   1   1   1   1   0   0   0   1   0   0   0   0   0   1   0   0   0   1   1   1   1   0   0   0   1   0   0   0   0   0   1   0   0   0   1   1   1   1   0   0   0   1   0   0   0   0   0   1   0   0   0   1   1   1   1   0   0   0   1   0   0   0   0   0   1   0   0   0   1   1   1   1   0   0   0   1   0   0   0   0   0   1   0   0   0   1   1   1   1   0   0   0   1   0    &\  0  \cr
 0   0   0   0   0   0   1   0   1   1   1   0   1   1   1   0   1   0   0   0   0   0   0   1   0   1   1   1   0   1   1   1   0   1   0   0   0   0   0   0   1   0   1   1   1   0   1   1   1   0   1   0   0   0   0   0   0   1   0   1   1   1   0   1   1   1   0   1   0   0   0   0   0   0   1   0   1   1   1   0   1   1   1   0   1   0   0   0   0   0   0   1   0   1   1   1   0   1   1   1   0   1   0    &\  0  \cr
 0   0   0   0   0   0   0   1   1   0   1   0   0   1   0   1   1   0   0   0   0   0   0   0   1   1   0   1   0   0   1   0   1   1   0   0   0   0   0   0   0   1   1   0   1   0   0   1   0   1   1   0   0   0   0   0   0   0   1   1   0   1   0   0   1   0   1   1   0   0   0   0   0   0   0   1   1   0   1   0   0   1   0   1   1   0   0   0   0   0   0   0   1   1   0   1   0   0   1   0   1   1   0    &\  0  \cr
\end{align*}
}


\end{document}